\newtheorem{theorem}{Theorem}[section]
\newtheorem{lemma}[theorem]{Lemma}
\newtheorem{proposition}[theorem]{Proposition}
\newtheorem{corollary}[theorem]{Corollary}
\newcommand{\fn}[1]{\mathrm{#1}}
\newcommand{\na}[1]{\mathsf{#1}}
\newcommand{\ax}[1]{(\mathsf{#1})}
\newcommand{\mdl}[1]{\mathcal{#1}}
\newcommand{\dash}{\text{-}}
\newcommand{\ph}{\varphi}
\newcommand{\QQ}{\mathbb{Q}}
\newcommand{\RR}{\mathbb{R}}
\newcommand{\st}{\; | \;}
\newcommand{\ML}{Martin-L\"of~}
\newcommand{\limplies}{\rightarrow}
\newcommand{\ex}[1]{\exists #1 \;} 
\newcommand{\fa}[1]{\forall #1 \;} 
\newcommand{\len}{\fn{length}}
\title[Randomness and the dominated convergence theorem]%
{Algorithmic randomness, reverse mathematics, and the dominated convergence theorem}
\author{Jeremy Avigad, Edward Dean, and Jason Rute}
\thanks{Work by the first and third authors has been partially supported by NSF grant DMS-1068829.}
\begin{document}

\begin{abstract}
 We analyze the pointwise convergence of a sequence of computable elements of $L^1(2^\omega)$ in terms of algorithmic randomness. We consider two ways of expressing the dominated convergence theorem and show that, over the base theory $\na{RCA_0}$, each is equivalent to the assertion that every $G_\delta$ subset of Cantor space with positive measure has an element. This last statement is, in turn, equivalent to weak weak K\"onig's lemma relativized to the Turing jump of any set. It is also equivalent to the conjunction of the statement asserting the existence of a 2-random relative to any given set and the principle of $\Sigma_2$ collection.
\end{abstract}

\maketitle

\section{Introduction}

Fix a measure space $\mdl X = (X, \mdl B, \mu)$. The dominated convergence theorem states that if $(f_n)$ is any sequence of integrable functions dominated by an integrable function $g$, and $(f_n)$ converges pointwise almost everywhere to a function $f$, then $f$ is an integrable function as well, and $(\int f_n)$ converges to $\int f$.

In the context of both computable measure theory and reverse mathematics, in the case where $X$ is a compact separable metric space and $\mdl B$ is the collection of Borel subsets of $\mdl X$, the space $L^1(\mdl X)$ of integrable functions modulo a.e.~equivalence can be represented as the completion of a countable set of test functions under the $L^1$ norm. There are then two ways that the dominated convergence theorem can be expressed in the language of second-order arithmetic, depending on whether the pointwise limit is assumed or asserted to exist as an element of $L^1(\mdl X)$. One option is to say that given a sequence $(f_n)$ of elements of $L^1(\mdl X)$ and an element $g$ of $L^1(\mdl X)$, if $(f_n)$ is dominated by $g$ and is pointwise convergent a.e., then there is an element $f$ of $L^1(\mdl X)$ such that $(f_n)$ converges to $f$ pointwise a.e.~and $(\int f_n)$ converges to $\int f$. The second option is to assume the existence of the limit, $f$, in advance, and say that  given $f$, $g$, and a sequence $(f_n)$ of elements of $L^1(\mdl X)$, if $(f_n)$ is dominated by $g$ and converges pointwise a.e.~to $f$, then $(\int f_n)$ converges to $\int f$. Let us call the first version $\ax{DCT}$ and the second version $\ax{DCT'}$.

Yu \cite{yu:94} has shown that, over $\na{RCA_0}$, $\ax{DCT}$ is equivalent to the arithmetic comprehension principle, $\ax{ACA}$. The implication from $\ax{DCT}$ to $\ax{ACA}$ is not difficult: if we take $\mdl X$ to be the unit interval $[0,1]$ under Lebesgue measure and take each $f_n$ to be a constant function, $\ax{DCT}$ implies, for example, that every monotone bounded sequence of rationals has a limit, a fact that easily implies $\ax{ACA}$ \cite{simpson:99}.

The status of $\ax{DCT'}$ remained open, however. Yu \cite{yu:94} showed that the corresponding formulation of the monotone convergence theorem is equivalent to a principle, weak weak K\"onig's lemma $\ax{WWKL}$, introduced by Yu and Simpson \cite{yu:simpson:90}. Simpson \cite{simpson:99} conjectured that $\ax{DCT'}$ is also equivalent to $\ax{WWKL}$. Our main result here is that, over $\na{RCA_0}$, $\ax{DCT'}$ is equivalent to a principle, $\ax{2\dash POS}$, which is strictly stronger than $\ax{WWKL}$, strictly weaker than $\ax{ACA}$, and incomparable with $\ax{WKL}$. $\ax{2\dash POS}$ asserts that any $G_\delta$ subset of Cantor space with positive measure has an element, and is equivalent to the relativization of $\ax{WWKL}$ to the Turing jump of any set. $\ax{2\dash POS}$ implies the statement $\ax{2\dash RAN}$ that there exists a 2-random relative to any given set. In fact, we show that over $\na{RCA_0}$, $\ax{2\dash POS}$ is equivalent to the conjunction of $\ax{2\dash RAN}$ and the principle of $\Sigma_2$ collection, $\ax{B\Sigma_2}$.

There is a more natural way of formulating the dominated convergence theorem without having to assert the existence of a pointwise limit. Namely, one says that given a sequence $(f_n)$ dominated by $g$, if $(f_n(x))$ is a Cauchy sequence for almost every $x$, then $(\int f_n)$ is Cauchy as well. Call this version $\ax{DCT^*}$. It is not hard to show that $\ax{DCT^*}$ implies $\ax{DCT'}$ over $\na{RCA_0}$, since whenever $(f_n(x))$ converges to $f(x)$ the sequence $f_0(x), f(x), f_1(x), f(x), f_2(x), \ldots$ is Cauchy. We show that, in fact, over $\na{RCA_0}$ the principle $\ax{DCT^*}$ is also equivalent to $\ax{2\dash POS}$. 

Below, we assume familiarity with some of the basic notions of algorithmic randomness \cite{downey:hirschfeldt:10,nies:09} and computable measure theory  \cite{galatolo:et:al:unp,hoyrup:rojas:09b,weihrauch:99}. We also assume familiarity with reverse mathematics, and, in particular, the formalization of measure-theoretic notions in subsystems of second-order arithmetic \cite{simpson:99,yu:simpson:90,yu:93,yu:94}. 

The outline of this paper is as follows. In Sections~\ref{randomness:section} and \ref{formalization:section}, we focus on Cantor space, $2^\omega$, as the natural setting for the study of algorithmic randomness. Section~\ref{randomness:section} considers measure-theoretic convergence statements in those terms. Section~\ref{formalization:section} begins to develop a framework for treating algorithmic randomness formally, in the context of subsystems of second-order arithmetic. These two strands come together in Section~\ref{reverse:section}, which provides a formal analysis of the dominated convergence theorem in terms of 2-randomness. In Section~\ref{n:randomness:section} we observe that the framework of Section~\ref{formalization:section} can be extended straightforwardly to deal with $n$-randomness, for every $n$.

The theory of 2-random subsets of $2^\omega$ has recently been brought to bear on reverse mathematics in an interesting way by Csima and Mileti \cite{csima:mileti:09}, who use it to build a model of the ``rainbow Ramsey theorem'' for pairs, $\ax{RRT^2_2}$, which is not a model of Ramsey's theorem for pairs. More recently, Theodore Slaman and Chris Conidis have announced that the argument of Csima and Mileti can be used to derive $\ax{RRT^2_2}$ from the axiom $\ax{2\dash RAN}$ we consider here. They have also shown that $\ax{2\dash RAN}$ is conservative over $\na{RCA_0 + \ax{B\Sigma_2}}$ for $\Pi^1_1$-sentences, and have studied its first-order consequences.

In an earlier version of this paper, we used the principle of $\Sigma_2$ induction, $\ax{I\Sigma_2}$, to prove $\ax{2\dash POS}$ from $\ax{2\dash RAN}$. We are grateful to Slaman for showing us a proof that only requires $\ax{B\Sigma_2}$, thereby strengthening our results; see Theorem~\ref{eq:thm:one} and Proposition~\ref{wwkl:ran:prop} below.

Kjos-Hanssen, Solomon, and Miller \cite{kjos:hanssen:et:al:unp} have considered a related principle, $\ax{POS}$, which asserts that every $G_\delta$ set of positive measure contains a closed set of positive measure. (The similarity of the name $\ax{2\dash POS}$ is coincidental.) Over $\na{RCA_0}$,  $\ax{POS} + \ax{WWKL}$ clearly implies $\ax{2\dash POS}$, while Cholak, Greenberg, and Miller \cite{cholak:et:al:06} have shown that $\ax{POS}$ does not imply $\ax{WWKL}$ and $\ax{WWKL} + \ax{POS}$ does not imply $\ax{WKL}$. We are grateful to Joseph Miller for calling our attention to this.

\section{Convergence and algorithmic randomness}
\label{randomness:section}

An element $\alpha$ of Cantor space, $2^\omega$, can be viewed as a one-way infinite binary sequence, but can also be identified with the set $X$ of natural numbers with characteristic function $\alpha$. A basis for the standard topology is given by the collection of sets of the form $[\sigma]$, where $\sigma$ is a finite binary sequence and $[\sigma]$ is the set of elements of $2^\omega$ that extend it. A \emph{name} (or \emph{code}) for an open subset of $2^\omega$ is a sequence $(B_i)_{i \in \omega}$, intended to denote $\bigcup_i B_i$, where each $B_i$ denotes a basic open set $[\sigma]$ or $\emptyset$. A subset $A$ of $2^\omega$ is \emph{computably open}, or $\Sigma^0_1$, if it has a computable name. The definition relativizes to any set $X$, so that open subsets of $2^\omega$ are exactly the ones that are $\Sigma^{0,X}_1$ for some $X$.

A $\Pi^0_1$ set is the complement of a $\Sigma^0_1$ set. These notions extend to the full arithmetic hierarchy; for example, a $\Pi^0_2$ set is of the form $\bigcap_i \bigcup_j B_{i,j}$, where $B_{i,j}$ is a uniformly computable sequence of basic open sets. Equivalently, one can view a $\Pi^0_2$ set as given by a computable sequence $(G_i)$ of (indices of) $\Sigma^0_1$ sets, or as given by a $\Pi^0_2$ formula in the language of arithmetic with free set variable $X$. It is not hard to pass back and forth between these representations \cite[Section 2.19]{downey:hirschfeldt:10}.

Now consider the usual coin-flipping measure $\mu$ on $2^\omega$ given by $\mu([\sigma]) = 2^{-\len(\sigma)}$. A \emph{\ML test}, or a \emph{\ML null set}, is a $\Pi^0_2$ set $\bigcap_i G_i$ which moreover has the property that $\mu(G_i) < 2^{-i}$; in other words, it is an effective sequence of $\Sigma^0_1$ sets whose measures converge to $0$ with an explicit rate of convergence. It is well known that an effective countable union of \ML tests is contained in a \ML test; in fact, there is a maximal one, called the \emph{universal} \ML test. An element of $2^\omega$ is \emph{\ML random}, or \emph{1-random}, if it is not an element of any \ML test, which is equivalent to saying that it is not an element of the universal \ML test. More generally, a \emph{$\Sigma^0_n$-test} is an effective sequence of $\Sigma^0_n$ sets whose measures converge to $0$ with the rate of convergence above, and an element of $2^\omega$ is \emph{$n$-random} if it is not in the universal $\Sigma^0_n$-test.

Let us turn to $L^1(2^\omega)$. A \emph{simple function} on $2^\omega$ is a function $f : 2^\omega \to \RR$ of the form $\sum_{i<n} a_i 1_{[\sigma_i]}$ where each $a_i \in \QQ$ and $1_{[\sigma_i]}$ denotes the characteristic function of $[\sigma_i]$. A \emph{name} of an element of $L^1(2^\omega)$ is a Cauchy sequence $(f_i)$ of (names of) simple functions such that for every $j \geq i$, $\| f_i - f_j \|_1 < 2^{-i}$.As before, an element $f$ of $L^1(2^\omega)$ is \emph{computable} if it has a computable name. One can show that if $(f_i)$ is a name of an element of $L^1(2^\omega)$, then $(f_i(x))$ converges for all $x$ outside a \ML null set, and that two computable names for the same element of $L^1(2^\omega)$ take the same value on \ML random points. (See \cite[Lemma 3.2]{pathak:09} and \cite[Section 4.2]{hoyrup:rojas:09x}, as well as \cite[Lemma 2.1]{yu:94}, which carries out the argument formally in $\na{RCA_0}$.) When we write $f(x)$ we mean to imply that this value is defined, which is to say, $\lim_i f_i(x)$ exists; and $f(x)$ then refers to this limit.

It will be convenient to blur the distinction between computable elements of $L^1(2^\omega)$ and their names. When we say that $(f_i)$ is a computable sequence of elements of $L^1(2^\omega)$, we mean that each $f_i$ is a computable element of $L^1(2^\omega)$ given by a sequence of names that are computable uniformly in $i$. When we say that $(f_i(x))$ converges we mean to imply that, moreover, $(f_i(x))$ is defined for every $i$.

\begin{theorem}
\label{effective:convergence:thm}
 Let $(f_i)$ be a computable sequence of elements of $L^1(2^\omega)$. Then up to a Martin-L\"of null set, the set of points $x$ such that the sequence $(f_i(x))$ converges is a $\Pi^0_3$ set. Similarly, if $f$ is a computable element of $L^1(2^\omega)$, then the set of points $x$ such that the sequence $(f_i(x))$ converges to $f(x)$ is a $\Pi^0_3$ set.
\end{theorem}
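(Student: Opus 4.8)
The plan is to reduce the pointwise convergence of the $L^1$ elements $(f_i)$ to the pointwise convergence of an associated sequence of honest simple functions, for which the convergence condition can be read off as a $\Pi^0_3$ predicate by a direct quantifier count. For each $i$, let $(f_{i,j})_j$ be the given name of $f_i$ and set $g_i := f_{i,i}$. Then $g_i$ is a simple function, uniformly computable in $i$, and $\|g_i - f_i\|_1 \le 2^{-i}$. The point of passing to the $g_i$ is that each $g_i$ takes finitely many rational values, each on a clopen basic subset of $2^\omega$; hence $g_i(x)$ is a rational uniformly computable from $x$, and for rational $q$ the set $\{x : |g_m(x) - g_n(x)| < q\}$ is clopen uniformly in $m,n,q$.

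The analytic heart is to show that $(f_i)$ and $(g_i)$ converge at the same points off a \ML null set. I would bound $|f_i(x) - g_i(x)|$ by the telescoping sum $\sum_{j \ge i}|f_{i,j+1}(x) - f_{i,j}(x)|$ and let $T(x)$ denote the double sum over $i$ and $j \ge i$. Since $\|f_{i,j+1} - f_{i,j}\|_1 < 2^{-j}$, we get $\int T \, d\mu < \infty$; and because $T$ is a sum of absolute values of differences of simple functions, the sets $\{x : T(x) > 2^t\}$ are effective unions of clopen sets, hence uniformly $\Sigma^0_1$, with $\mu(\{x : T(x) > 2^t\}) = O(2^{-t})$ by Markov's inequality. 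After reindexing these form a \ML test whose intersection $\{x : T(x) = \infty\}$ is \ML null. Off that set $\sum_i |f_i(x) - g_i(x)| \le T(x) < \infty$, so $f_i(x) - g_i(x) \to 0$; together with the cited fact that each $f_i(x)$ is defined off a \ML null set (and that a countable union of \ML null sets is \ML null), this produces a single \ML null set $Z$ off which $(f_i(x))$ converges exactly when $(g_i(x))$ does.

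It then remains only to count quantifiers. Off $Z$, the convergence set agrees with $\{x : \forall k \, \exists N \, \forall m,n \ge N \ |g_m(x) - g_n(x)| < 2^{-k}\}$; since the matrix is a clopen, hence decidable, condition, this is a $\Pi^0_3$ set, which proves the first claim. For the second part, write $(h_j)_j$ for the name of $f$, so that $h_i(x) \to f(x)$ off a \ML null set. There $(f_i(x)) \to f(x)$ is equivalent to $g_i(x) - h_i(x) \to 0$, that is, to $\{x : \forall k \, \exists N \, \forall i \ge N \ |g_i(x) - h_i(x)| < 2^{-k}\}$, again $\Pi^0_3$ by the same clopen-matrix observation.

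I expect the delicate part to be the effective measure estimate of the second paragraph: showing that replacing each $f_i$ by its simple approximant $g_i$ preserves convergence behaviour off a genuine \ML null set is exactly what forces the ``up to a \ML null set'' qualification, and it requires packaging the $L^1$ summability $\sum_i \|f_i - g_i\|_1 < \infty$ into an actual \ML test rather than a mere almost-everywhere statement. The quantifier count itself is routine, but it hinges on the observation that simple functions have clopen level sets, so that pointwise comparisons are decidable rather than merely $\Sigma^0_1$; this is precisely what keeps the complexity at $\Pi^0_3$ instead of letting it drift up to $\Pi^0_4$.
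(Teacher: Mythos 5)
Your proof is correct and follows essentially the same route as the paper's: replace each $f_i$ by a simple-function approximant without changing convergence behaviour off a Martin-L\"of null set, and then read off the $\Pi^0_3$ form from the decidable (clopen) matrix. The only differences are that you supply a proof of the replacement lemma (via the telescoping sum, Markov's inequality, and an explicit Martin-L\"of test) where the paper cites Yu and Rute, and that you treat the second claim by comparing against the name of $f$ directly rather than reducing to $f=0$; both are harmless variations.
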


\begin{proof}
  Notice that in the last claim we can assume without loss of generality that $f = 0$, by considering the sequence $(f_i - f)_{i \in \omega}$. One can show that it is possible to effectively replace each $f_i$ with a simple function $f'_i$ without changing the limiting behavior of $f_i(x)$ on more than a \ML null set; see \cite[Lemma 3.1]{yu:94} and \cite{rute:unp}. (In the proof of Theorem~\ref{main:thm} below, it will be important to recognize that this argument goes through in $\na{RCA_0}$.) So, up to a \ML null set, the set of points $x$ for which $(f_i(x))$ converges is equal to $\{ x \st \fa {\varepsilon > 0} \ex m \fa {n > m} |f'_n(x) - f'_m(x)| \leq \varepsilon \}$, clearly a $\Pi^0_3$ set. Similarly, the set of points $x$ for which $(f_i(x))$ converges to $0$ is equal to $\{ x \st \fa {\varepsilon > 0} \ex m \fa {n > m} |f'_n(x)| \leq \varepsilon \}$.
\end{proof}

In the statement of the next corollary, $x$ is \emph{weakly 2-random}, by definition, if it is not in any null $\Pi^0_2$ set. (So every 2-random element of $2^\omega$ is weakly 2-random, and every weakly 2-random element is 1-random.) Similar considerations appear in Brattka, Miller, and Nies \cite{brattka:miller:nies}.

\begin{corollary}
 If $(f_i)$ is a computable sequence of elements of $L^1(2^\omega)$ that is pointwise a.e.~convergent to $0$, then $(f_i(x))$ converges to $0$ for every weak 2-random $x$.
\end{corollary}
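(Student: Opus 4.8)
The plan is to exhibit the set of points where $(f_i(x))$ fails to converge to $0$ as a countable union of null $\Pi^0_2$ sets. Since a weak 2-random point avoids every null $\Pi^0_2$ set by definition, it must avoid this union, and hence $(f_i(x)) \to 0$ there. The key structural observation is that although the full set where $(f_i(x))$ converges is only $\Pi^0_3$ (by Theorem~\ref{effective:convergence:thm}), convergence to the specific value $0$ has a complement that splits into countably many null $\Pi^0_2$ pieces, which is exactly the shape weak 2-randomness is designed to handle.

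First I would carry out the reduction used in the proof of Theorem~\ref{effective:convergence:thm}: replace each $f_i$ by a simple function $f'_i$, altering the limiting behavior of $(f_i(x))$ only on a \ML null set $N$. Since every weak 2-random point is $1$-random, it lies outside $N$, so $\lim_i f_i(x) = 0$ iff $\lim_i f'_i(x) = 0$ at such points, and it suffices to work with the $f'_i$. The payoff is that each $f'_n$ is simple, hence takes finitely many values, so $\{x \st |f'_n(x)| > \varepsilon\}$ is clopen (in particular $\Sigma^0_1$) for every rational $\varepsilon > 0$, and $f'_n(x)$ is defined at every $x$.

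Next, for each $k$ I would set $D_k = \{x \st |f'_n(x)| > 1/k \text{ for infinitely many } n\} = \bigcap_m \bigcup_{n > m} \{x \st |f'_n(x)| > 1/k\}$. As a countable intersection of open sets, $D_k$ is $\Pi^0_2$. The set of points where $(f'_i(x))$ does not converge to $0$ is precisely $\bigcup_k D_k$, since failing to converge to $0$ means $\limsup_n |f'_n(x)| > 0$. Because $(f_i)$, and therefore $(f'_i)$ off $N$, converges to $0$ almost everywhere, this union is null, so each $D_k$ is a null $\Pi^0_2$ set. A weak 2-random $x$ thus avoids each $D_k$, hence their union, giving $\lim_i f'_i(x) = 0$ and so $\lim_i f_i(x) = 0$.

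I do not expect a serious obstacle once Theorem~\ref{effective:convergence:thm} is in hand; the work is in recognizing the right decomposition rather than in any delicate estimate. The two points that require care are verifying that the passage to simple functions and the discarding of $N$ are legitimate here (which is where $1$-randomness of weak 2-randoms is used), and confirming that the $\limsup$ sets $D_k$ are genuinely $\Pi^0_2$ and null rather than merely measure-theoretically small in some weaker sense.
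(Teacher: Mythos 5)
Your proposal is correct and follows essentially the same route as the paper: the paper's one-line proof cites Theorem~\ref{effective:convergence:thm} to place the non-convergence set inside a null $\Sigma^0_3$ set and then decomposes it into countably many null $\Pi^0_2$ sets, which is exactly what your explicit sets $D_k$ accomplish. Your added care about passing to the simple functions $f'_i$ and discarding the \ML null set via $1$-randomness of weak $2$-randoms is implicit in the paper's appeal to that theorem.
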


\begin{proof}
By Theorem~\ref{effective:convergence:thm}, the set of points $x$ for which $(f_i(x))$ doesn't converge is contained in a null $\Sigma^0_3$ set, and hence a countable union of null $\Pi^0_2$ sets.
\end{proof}

\begin{corollary}
\label{eff:cor}
  If $(f_i)$ is any computable sequence of elements of $L^1(2^\omega)$ that is not pointwise a.e.~convergent to $0$, then there is a $\Pi^0_2$ set $A$ with positive measure such that $(f_i(x))$ does not converge to $0$ for any $x$ in $A$.
\end{corollary}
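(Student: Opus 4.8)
The plan is to extract the desired set directly from the $\Sigma^0_3$ description of the non-convergence set furnished by the proof of Theorem~\ref{effective:convergence:thm}. Write $D$ for the set of $x$ at which $(f_i(x))$ fails to converge to $0$; the hypothesis that $(f_i)$ is not pointwise a.e.\ convergent to $0$ is precisely the statement that $\mu(D) > 0$. Recall from that proof that one may replace each $f_i$ by a simple function $f'_i$, uniformly and effectively, so that the limiting behavior of $(f_i(x))$ and $(f'_i(x))$ agrees off some \ML null set $N = \bigcap_j G_j$, where the $G_j$ are uniformly $\Sigma^0_1$ with $\mu(G_j) < 2^{-j}$. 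Negating the $\Pi^0_3$ condition for convergence to $0$ and replacing the quantifier over positive reals by one over positive rationals, the non-convergence set of $(f'_i)$ becomes $\bigcup_{k} A_k$, where
\[ A_k = \{\, x \st \fa{m} \ex{n > m} |f'_n(x)| > 1/k \,\}. \]
Since each $f'_n$ is simple, $|f'_n(x)| > 1/k$ defines a clopen set, so each $A_k$ is $\Pi^0_2$.

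Next I would locate a single $A_k$ of positive measure. Because the limiting behaviors of $(f_i)$ and $(f'_i)$ coincide off $N$, the symmetric difference of $D$ and $\bigcup_k A_k$ is contained in $N$ and hence null, so $\mu(\bigcup_k A_k) = \mu(D) > 0$. If every $A_k$ were null then $\bigcup_k A_k$ would be null as well, a contradiction; so $\mu(A_{k}) > 0$ for some fixed $k$.

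The one remaining point---and the only genuine subtlety---is that $A_k$ is contained in the non-convergence set of $(f_i)$ only up to the null set $N$, whereas we need a $\Pi^0_2$ set on which $(f_i(x))$ genuinely fails to converge to $0$ at \emph{every} point. To remove $N$ without leaving the class $\Pi^0_2$, I would intersect with a closed set of large measure coming from the test itself: since $N \subseteq G_j$ and $\mu(G_j) < 2^{-j}$, choosing $j$ with $2^{-j} < \mu(A_k)$ and setting $A = A_k \cap G_j^c$ gives $\mu(A) \geq \mu(A_k) - \mu(G_j) > 0$. As $G_j^c$ is $\Pi^0_1$, hence $\Pi^0_2$, and $\Pi^0_2$ is closed under intersection, $A$ is again $\Pi^0_2$; and every $x \in A$ lies outside $G_j \supseteq N$, so the agreement of limiting behaviors off $N$ guarantees that $(f_i(x))$, like $(f'_i(x))$, does not converge to $0$. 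This $A$ is the required set.
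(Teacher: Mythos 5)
Your proof is correct and takes essentially the same route as the paper's: both extract the $\Sigma^0_3$ non-convergence set supplied by Theorem~\ref{effective:convergence:thm}, decompose it into countably many $\Pi^0_2$ pieces of which one must have positive measure, and dispose of the Martin-L\"of null set of exceptions by discarding a single open component $G_j$ of sufficiently small measure, which keeps the resulting set $\Pi^0_2$. Your writeup simply makes explicit the step the paper compresses into the phrase ``minus the Martin-L\"of null set of exceptions.''
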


\begin{proof}
  Considering the $\Sigma^0_3$ set promised by Theorem~\ref{effective:convergence:thm} minus the \ML null set of exceptions, we see that the hypothesis implies that the set of $x$ such that $(f_i(x))$ doesn't converge contains a $\Sigma^0_3$ set with positive measure. But this is a countable union of $\Pi^0_2$ sets, one of which has to have positive measure.
\end{proof}


We will see in Section~\ref{formalization:section} that if $A$ is any $\Pi^0_2$ set with positive measure, then any 2-random element of $2^\omega$ computes an element of $A$. We will exploit this fact, together with Corollary~\ref{eff:cor}, in Section~\ref{reverse:section}, to show that over a suitable base theory the existence of 2-random elements of $2^\omega$ (relative to any set) implies the dominated convergence theorem. Roughly speaking, assuming that the conclusion of the dominated convergence theorem fails, we will produce an explicit $\Pi^0_2$ set with positive measure, any element of which provides a counterexample to the hypothesis.

In the other direction, to show that the dominated convergence theorem implies the existence of 2-randoms, it suffices to show that the dominated convergence theorem implies that every $\Pi^0_2$ set with positive measure has an element (since the complement of the universal $\Sigma^0_2$ test is the union of such sets). This will involve formalizing the following theorem.

\begin{theorem}
\label{two:pos:thm}
 Let $A$ be a $\Pi^0_2$ set with measure greater than $\delta$, for some $\delta > 0$. Then there is a sequence of simple characteristic functions $f_i$ such that $\int f_i > \delta$ for each $i$, but $(f_i(x))$ converges to $0$ for every $x$ outside of $A$.  
\end{theorem}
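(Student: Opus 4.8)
The plan is to build each $f_i$ as the characteristic function of a clopen set sitting inside the $i$-th open approximation to $A$, chosen large enough in measure. Write $A = \bigcap_i G_i$ where $(G_i)$ is a computable sequence of (names of) $\Sigma^0_1$ sets. The first step is to arrange that the $G_i$ are nested and decreasing: replacing $G_i$ by $\bigcap_{j \le i} G_j$ leaves $A$ unchanged, keeps each approximation $\Sigma^0_1$ uniformly (a finite intersection of open sets is open), and gives $G_0 \supseteq G_1 \supseteq \cdots$. This nesting is exactly what will force the pointwise convergence off $A$, so I would insist on it from the outset. Since $A \subseteq G_i$ we then have $\mu(G_i) \ge \mu(A) > \delta$ for every $i$; fix once and for all a rational $q$ with $\delta < q < \mu(A)$, which exists because $\mu(A) > \delta$.

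Next I would approximate each $G_i$ from inside. Writing $G_i = \bigcup_k [\tau_{i,k}]$ for a computable double sequence of strings, the finite unions $\bigcup_{k < N} [\tau_{i,k}]$ are clopen, are contained in $G_i$, and have measures (computable rationals, obtained by a common refinement to disjointify overlaps) increasing to $\mu(G_i) \ge \mu(A) > q$. Hence there is a least $N_i$ with $\mu\bigl(\bigcup_{k < N_i} [\tau_{i,k}]\bigr) > q$, and I set $S_i = \bigcup_{k < N_i} [\tau_{i,k}]$ and $f_i = 1_{S_i}$. Each $f_i$ is a simple characteristic function (rewrite $S_i$ as a disjoint union of cylinders), and $\int f_i = \mu(S_i) > q > \delta$, giving the first conclusion.

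For the second conclusion, suppose $x \notin A$. Then $x \notin G_{i_0}$ for some $i_0$, and by the nesting $x \notin G_i$ for all $i \ge i_0$; since $S_i \subseteq G_i$ this forces $x \notin S_i$, that is, $f_i(x) = 0$, for all $i \ge i_0$, so $(f_i(x))$ converges to $0$. The whole construction is effective in the given name for $A$ (together with the rational $q$), as will be needed for the later formalization in $\na{RCA_0}$.

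The construction is short, so the only real issues are bookkeeping ones, and the step I would watch most carefully is the interaction between the two requirements on $S_i$: it must have measure above the threshold \emph{and} lie inside $G_i$. The containment $S_i \subseteq G_i$ is what makes escape from $A$ propagate to escape from the $S_i$, and it is the nesting of the $G_i$ that upgrades ``$x$ eventually leaves some $G_i$'' to ``$x$ stays out of all later $S_i$.'' The remaining care is routine: computing the measure of a finite union of possibly overlapping cylinders, and replacing the (possibly irrational) $\delta$ by a rational threshold $q$ so that the search for $N_i$ terminates decidably.
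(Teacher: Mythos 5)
Your proposal is correct and is essentially the paper's own argument: both reduce to nested open approximations $G_0 \supseteq G_1 \supseteq \cdots$ of $A$, take a finite (clopen) sub-union of each $G_i$ with measure above the threshold as the support of $f_i$, and use the nesting to propagate ``$x \notin G_{i_0}$'' to ``$f_i(x) = 0$ for all $i \ge i_0$.'' Your extra care about the rational cutoff $q$ and disjointifying cylinders is just bookkeeping the paper leaves implicit.
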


\begin{proof}
 Let $A = \bigcap_i G_i$, where each $G_i = \bigcup_j B_{i,j}$ is open with measure greater than $\delta$ and $G_0 \supseteq G_1 \supseteq \ldots$. Let $G'_i = \bigcup_{j \leq k} B_{i,j}$ for the least $k$ integer making the measure of this set greater than $\delta$. Let $f_i = 1_{G'_i}$. Then for each $i$, $\int f_i > \delta$. On the other hand, if $x \not\in A$, then $x \not\in G_i$ for some $i$, in which case $f_{i'}(x) = 0$ for every $i' \geq i$.
\end{proof}

All of the theorems and corollaries in this section relativize to an arbitrary set. In the next section, we will make these relativizations explicit. Section~\ref{reverse:section} deals with the same measure-theoretic notions in the context of a more general set of finite measure spaces. All the results described here hold in that more general setting, and the proofs can be adapted straightforwardly.

\section{Formalizing 2-randomness and related notions}
\label{formalization:section}

We now begin to provide a framework for the study of algorithmic randomness in the context of reverse mathematics. This involves importing the definitions in the last section to the language of second-order arithmetic. Specifically, we say (a code for) a \emph{$\Sigma^{0,X}_1$ subset $A$ of $2^\omega$} is (an index of) a sequence $(B_i)_{i \in \omega}$ of basic open sets, computable from $X$. With respect to the usual development of topological notions in reverse mathematics, an open subset of Cantor space is just a $\Sigma^{0,X}_1$ set for some $X$. We can then view a $\Pi^{0,X}_1$ set as the complement of a $\Sigma^{0,X}_1$ set. Similarly, a $\Pi^{0,X}_2$ subset of $2^\omega$ is a doubly-indexed sequence $B_{i,j}$ of basic open sets, viewed as $\bigcap_i \bigcup_j B_{i,j}$. These correspond to the $G_\delta$ subsets of Cantor space that are computable from $X$.

In the language of second-order arithmetic we can take $Y \in \bigcup_i [\sigma_i]$ to mean $\ex i (\sigma_i \subset Y)$, where the notation $\sigma_i \subset Y$ means that $\sigma_i$ is an initial segment of the characteristic function of $Y$. More generally, membership of $Y$ in a $\Sigma^{0,X}_n$ (resp.~$\Pi^{0,X}_n)$ set can be expressed by a $\Sigma^{0,X}_n$ (resp.~$\Pi^{0,X}_n$) formula with the additional parameter $Y$. But it is important to keep in mind that such a set is an \emph{intensional object}: it is a \emph{description} of a set of subsets of $\omega$, rather than the set itself. In particular, different ``sets'' $A$ can represent the same subset of $2^\omega$. 

As usual, the measure of an open set $\bigcup_i B_i$ is $\lim_n \mu(\bigcup_{i < n} B_i)$. When it comes to weak theories of reverse mathematics, however, one has to be careful, for at least three reasons. First, a weak theory cannot prove that the measure of an open set always exists; indeed, Yu \cite{yu:93} shows that this is equivalent, over the base theory $\na{RCA_0}$, to the principle $\ax{ACA}$ of arithmetic comprehension. Second, a weak theory cannot prove that extensionally equivalent descriptions of an open set have the same measure. For example, Yu and Simpson \cite{yu:simpson:90} show that the statement ``if $\bigcup_i B_i = 2^\omega$ then $\mu(\bigcup_i B_i) = 1$'' is equivalent to the principle $\ax{WWKL}$ discussed below, which is not provable in $\na{RCA_0}$. Finally, different characterizations of the measure of a set need not coincide; for example, if one defines the measure of a closed set in terms of its complement, in weak theories one cannot show that the measure of a closed set is the infimum of the measures of open sets covering it. Yu \cite{yu:93} shows that $\na{ACA_0}$ proves the existence and regularity of measures of sets at any finite level of the Borel hierarchy, eliminating all three problems in that axiomatic context. But in the context of the theories discussed here, the reader should keep in mind that all the definitions below are ``intensional'' and don't generally presuppose the existence of measures.

The principle $\ax{WWKL}$ introduced by Yu and Simpson \cite{yu:simpson:90} is as follows:
\begin{multline*}
 \fa T (\mbox{if $T$ is an infinite binary tree and} \\
  \lim_{n \to \infty} \frac{|\{\sigma \in T \st \len(\sigma) = n\}|}{2^n} > 0,
  \mbox{there is a path through $T$}).
\end{multline*}
If $A$ is a $\Sigma^{0,X}_1$ set $\bigcup_i B_i$ and $\delta \in \QQ$, then $\mu(A) > \delta$ is defined to be the assertion $\ex m (\mu(\bigcup_{i < m} B_i) > \delta)$. Notice that this is a $\Sigma^0_1$ formula in $X$, $A$, and $\delta$. Similarly,  $\mu(A) \leq \delta$ is the $\Pi^0_1$ assertion $\fa m (\mu(\bigcup_{i < m} B_i) \leq \delta)$. If $A$ is a $\Pi^{0,X}_1$ set, we can also express ``$\mu(A) < \delta$'' and ``$\mu(A) \geq \delta$,'' respectively, as $\Sigma^0_1$ and $\Pi^0_1$ formulas in $X$, $A$, and $\delta$. Let $\ax{1\dash POS}$ be the statement
\[
 \fa {X, A \in \Pi^{0,X}_1, \delta > 0} (\mu (A) \geq \delta \limplies \ex Y (Y \in A)).
\]
This expresses the statement that every closed set with positive measure has an element. Finally, define a \emph{Martin-L\"of test relative to $X$} to be a uniformly computable sequence $(G_i)_{i \in \omega}$ of $\Sigma^{0,X}_1$ sets such that for each $i$, $\mu(G_i) \leq 2^{-i}$. A set $Y$ is \emph{1-random relative to $X$} if for every Martin-L\"of test $(A_i)$ relative to $X$, $Y \not\in \bigcap_i A_i$, that is, $\ex i (Y \not\in A_i)$. Let $\ax{1\dash RAN}$ be the following principle:
\[
 \fa X \ex Y (\mbox{$Y$ is 1-random relative to $X$}).
\]
The fact that for any $X$ there is a universal Martin-L\"of test $(U_i)$ relative to $X$ can be proved straightforwardly in $\na{RCA_0}$. So, if $(U_i)$ is any such test, $Y$ is 1-random relative to $X$ if and only if $Y$ is not in $U_i$ for some $i$.

In the context of formal theories of arithmetic, recall the collection principles $\ax{B\Sigma_n}$:
\[
 \fa {x < u} \ex y \ph(x,y) \limplies \ex v \fa {x < u} \ex {y < v} \ph(x,y),
\]
where $\ph$ is any $\Sigma_n$ formula, possibly with number and set parameters other than $x$ and $y$ (see \cite{hajek:pudlak:93,simpson:99}). Let $\ax{I\Sigma_n}$ denote $\Sigma_n$ induction. Over a weak theory, $\ax{I\Sigma_n}$ implies $\ax{B\Sigma_n}$, and $\ax{B\Sigma_{n+1}}$ implies $\ax{I\Sigma_n}$. In particular, $\na{RCA_0}$ proves $\ax{B\Sigma_1}$, which suffices to show that the set of $\Sigma_1$ formulas is closed under bounded quantification. We will make use of these facts below.

\begin{theorem}
\label{eq:thm:one}
Over $\na{RCA_0}$, the following are equivalent:
\begin{enumerate}
 \item $\ax{WWKL}$
 \item $\ax{1\dash POS}$
 \item $\ax{1\dash RAN}$
\end{enumerate}
\end{theorem}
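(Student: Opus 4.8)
The plan is to establish the cycle of implications $\ax{WWKL} \limplies \ax{1\dash POS} \limplies \ax{1\dash RAN} \limplies \ax{WWKL}$, working throughout with the correspondence between $\Pi^{0,X}_1$ subsets of $2^\omega$ and binary trees computable from $X$. Since the level densities of a tree form a non-increasing sequence, I would read the hypothesis of $\ax{WWKL}$ in the form that there is a rational $\delta > 0$ with $|\{\sigma \in T \st \len(\sigma) = n\}| \cdot 2^{-n} \ge \delta$ for all $n$; the two readings agree for monotone sequences, and only this lower bound is ever used.

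For $\ax{WWKL} \limplies \ax{1\dash POS}$, suppose $A$ is $\Pi^{0,X}_1$ with $\mu(A) \ge \delta$, say $A = 2^\omega \setminus \bigcup_i [\rho_i]$. I would form the tree $T = \{\tau \st \rho_i \not\subseteq \tau \text{ for all } i \le \len(\tau)\}$, which is computable from $X$, downward closed, and satisfies $[T] = A$. The $\Pi^0_1$ hypothesis $\mu(A) \ge \delta$ says exactly that each finite clopen approximation of $\bigcup_i [\rho_i]$ has measure $\le 1 - \delta$, so the level-$n$ density of $T$ is $\ge \delta$ for every $n$; in particular $T$ is infinite. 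Applying $\ax{WWKL}$ yields a path, which unwinds to an element of $A$. For $\ax{1\dash POS} \limplies \ax{1\dash RAN}$, fix $X$ and let $(U_i)$ be a universal \ML test relative to $X$ (available in $\na{RCA_0}$). Then $A = 2^\omega \setminus U_1$ is $\Pi^{0,X}_1$ and $\mu(A) \ge 1/2$, so $\ax{1\dash POS}$ produces some $Y \in A$; since $Y \not\in U_1$ and $\bigcap_i U_i \subseteq U_1$, the point $Y$ is 1-random relative to $X$.

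The crux is $\ax{1\dash RAN} \limplies \ax{WWKL}$. Given an infinite tree $T$ whose densities are bounded below by $\delta$, the class $[T]$ is a $\Pi^{0,T}_1$ set with $\mu([T]) \ge \delta$ (the same computation as above). Using $\ax{1\dash RAN}$ with the set $T$, I obtain a $Z$ that is 1-random relative to $T$, and I would then invoke the effective, relativized form of Kučera's theorem: some tail $\sigma^n(Z)$ of $Z$ lies in $[T]$, hence is an explicit path through $T$. The engine is that the set $N$ of reals no tail of which lies in $[T]$ is contained in a \ML test relative to $T$: writing $[T] = \bigcap_s C_s$ with $C_s$ clopen of measure $\ge \delta$, one selects shift positions spaced far enough apart that the corresponding clopen events become independent, so that avoiding $[T]$ at $k$ such positions has probability at most $(1-\delta)^k$; letting $k$ grow gives a sequence of $\Sigma^{0,T}_1$ sets whose measures tend to $0$. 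A $Z$ that is 1-random relative to $T$ escapes this test and therefore has a tail in $[T]$.

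The hard part is not the combinatorics of the cycle but the formalization of this last step in $\na{RCA_0}$. All measure computations must be kept finitary — comparisons of measures of \emph{clopen} sets, which are $\Sigma^0_1$ (resp.\ $\Pi^0_1$) facts — since $\na{RCA_0}$ cannot assert that arbitrary open sets have measures at all. I would accordingly build the test $N$ from explicit clopen stages and verify the geometric bound $(1-\delta)^k$ by an induction on $k$ that uses only the bounded counting furnished by $\ax{B\Sigma_1}$, confirming that the choice of well-separated shifts and their near-independence can be certified by such finitary estimates. A secondary point requiring care is the $\lim$ in the statement of $\ax{WWKL}$, which I treat throughout as a verified positive rational lower bound rather than as presupposing convergence of the density sequence — a convergence that is itself unavailable below $\ax{ACA}$.
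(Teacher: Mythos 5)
Your proposal is correct over $\na{RCA_0}$, but the crucial direction is argued by a genuinely different route than the paper's. You close the cycle with $\ax{1\dash RAN} \limplies \ax{WWKL}$ via a relativized Ku\v{c}era theorem: well-separated shifts of a 1-random must land in any positive-measure $\Pi^{0,X}_1$ class, with the test built from the geometric bound $(1-\delta)^k$ obtained by induction on the number of shift positions. The paper instead proves $\ax{1\dash RAN} \limplies \ax{1\dash POS}$ directly (citing Yu--Simpson for $\ax{WWKL} \liff \ax{1\dash POS}$) by a product argument due to Slaman: decompose the random $Y$ into countably many columns $Y_i = \pi_i Y$ via a pairing function, note that distinct columns impose independent constraints, and show that if every $Y_i$ lay in the complement $\overline{C}$ (of measure $< \delta < 1$), then $\ax{B\Sigma_1}$ alone would place $Y$ in open sets $G_n$ of measure $\leq \delta^n$, contradicting 1-randomness. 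The difference matters for the base theory: the authors note that their earlier draft used exactly your Ku\v{c}era-style argument and that it requires $\Sigma_1$ \emph{induction} to define the shift positions recursively and certify the measure bound at stage $k$, whereas the column argument needs only $\Sigma_1$ \emph{collection} and therefore goes through in $\na{RCA_0^*}$. Since $\na{RCA_0}$ includes $\ax{I\Sigma_1}$, your proof does establish the stated theorem; but your claim that the verification of the $(1-\delta)^k$ bound needs ``only the bounded counting furnished by $\ax{B\Sigma_1}$'' is over-optimistic --- the recursion selecting the separated shifts and the stage-by-stage measure estimate constitute a genuine $\Sigma_1$ induction, which is precisely the cost the paper's argument was redesigned to avoid. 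The remaining legs of your cycle ($\ax{WWKL} \limplies \ax{1\dash POS}$ via the tree--closed-set correspondence, and $\ax{1\dash POS} \limplies \ax{1\dash RAN}$ via the complement of $U_1$ in a universal test) coincide with the paper's, as does your reading of the density hypothesis as a verified lower bound rather than an assumed limit.
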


\begin{proof}
 The equivalence of $\ax{WWKL}$ and $\ax{1\dash POS}$ is proved by Yu and Simpson \cite{yu:simpson:90}. (In fact, Yu and Simpson prove that the conclusion holds for a wider classes of measure spaces; we will return to this in the next section.) So let us focus on the equivalence of $\ax{1\dash POS}$ and $\ax{1\dash RAN}$, which is asserted in \cite{simpson:09} without proof.

 First, suppose $\ax{1\dash POS}$. Given $X$, let $(U_i)$ be the universal \ML test relative to $X$. Then the complement of $U_1$ is a $\Pi^{0,X}_1$ set with positive measure, and any element of this set is 1-random relative to $X$.

 Conversely, assume $\ax{1\dash RAN}$. Let $C$ be a $\Pi^{0,X}_1$ set with positive measure, and let $\overline C = \bigcup_i [\sigma_i]$ be its complement. Without loss of generality, we can assume the sets $[\sigma_i]$ are disjoint. Let $\delta$ be a rational number less than 1 such that $\mu(\overline C) < \delta$. 

 Notice that, fixing a primitive recursive pairing function on the natural numbers, we can think of any set $W$ of natural numbers as coding a sequence $(W_i)_{i \in \omega}$ of sets of natural numbers, where $j \in W_i$ if and only if $(i, j) \in W$. For each $i$, write $W_i = \pi_i W$. If $\sigma$ is any finite binary sequence, then $\pi^{-1}_i [\sigma]$ is a finite union of cylinder sets, easily computable from $[\sigma]$. Moreover, the measure of $\pi^{-1}_i [\sigma]$ is clearly equal to the measure of $[\sigma]$, since the condition $U \in \pi^{-1}_i [\sigma]$ imposes $\len(\sigma)$-many constraints on the bits of $U$. Moreover, for any $\sigma$, $\tau$, and $i \neq j$, the sets $\pi^{-1}_i [\sigma]$ and $\pi^{-1}_j [\tau]$ are independent, which is to say, $\mu(\pi^{-1}_i [\sigma] \cap \pi^{-1}_j [\tau]) = \mu([\sigma]) \mu([\tau])$. This extends to finite unions of basic open sets: if $D$ and $E$ are such sets and $i \neq j$, then $\mu(\pi^{-1}_i D \cap \pi^{-1}_j E) = \mu(D) \mu(E)$.

 Returning to the proof, using $\ax{1\dash RAN}$, let $Y$ be 1-random relative to $X$. As above, write $Y_i = \pi_i Y$. It suffices to show that for some $i$, $Y_i$ is in $C$. Our proof will implicitly use the fact that each $Y_i$ is 1-random relative to $X$. The idea is to show that if each $Y_i$ is in $\overline C$, then, because the measure of $\overline C$ is less than 1, $Y$ itself is contained in a sequence of arbitrarily small open sets.

 In more detail, suppose that for every $i$, $Y_i$ is in $\overline C$. Then for every $i$ there is a $j$ such that $Y_i$ is in $[\sigma_j]$. Thus, by $\ax{B\Sigma_1}$, we have
\[
 \fa n \ex k \fa{i \leq n} \ex{j \leq k} Y_i \not\in [\sigma_j].
\]
 For each $n$, let $G_n = \{ W \st \fa {i \leq n} (W_i \in \overline C) \}$. Then $G_n$ is an open set, since we can write
\begin{align*}
 G_n & = \{ W \st  \ex k \fa{i \leq n} \ex{j \leq k} W_i \not\in [\sigma_j] \} \\
  & = \bigcup_k \bigcap_{i \leq n} \bigcup_{j \leq k} \pi_i^{-1} ([\sigma_j]) \\
  & = \bigcup_k \bigcap_{i \leq n} \pi_i^{-1}\left( \bigcup_{j \leq k}[\sigma_j] \right),
\end{align*}
 and for each $n$, $Y$ is in $G_n$. Moreover, by the observations above, for each $k$ we have
\[
 \mu\left(\bigcap_{i \leq n} \pi_i^{-1}\left( \bigcup_{j \leq k}[\sigma_j] \right) \right) \leq \delta^n,
\]
 and so $\mu(G_n) \leq \delta^n$. Thinning the sequence $(G_n)$ to a \ML test, we have a contradiction to the fact that $Y$ is 1-random relative to $X$.
\end{proof}

In an earlier draft, we used a formalization of Ku{\v{c}}era's theorem \cite{kucera:85} (see also \cite[Section 6.10]{downey:hirschfeldt:10}) to prove that $\ax{1\dash RAN}$ implies $\ax{1\dash POS}$. This, in turn, required the use of $\Sigma_1$ induction. We are grateful to Theodore Slaman for showing us the proof above, which uses only $\Sigma_1$ collection (so, in fact, the equivalence goes through in the system $\na{RCA_0^*}$ of Simpson and Smith \cite{simpson:smith:86,simpson:99}). This also enabled us to strengthen the statements of Proposition~\ref{wwkl:ran:prop} and Theorem~\ref{main:rand:theorem} below.

Our goal now is to carry out a similar analysis of 2-randomness, as well as the assertion that there is an element of any $\Pi^{0,X}_2$ set with positive measure and an analogue of $\ax{WWKL}$ which involves trees computable from $X'$. But there are two fine points that need to be addressed: first, how to say that a $\Pi^{0,X}_2$ set has positive measure, in light of the warnings above; and second, how to refer to $X'$ when the existence of Turing jumps is not provable in the weak theories we are considering here.

The second concern is easily met: simply use an appropriate $\Sigma^{0,X}_1$ formula to describe the Turing jump of $X$. We can express the fact that Turing machine $e$ with oracle $X$ halts on input $x$ and returns $y$, denoted $\ph_e^{X}(x) \downarrow = y$, by the formula $\ex {\sigma \subset X} \ph^{\sigma}_e(x) \downarrow = y$. Here $\ph^{\sigma}_e(x) \downarrow$ expresses the assertion that Turing machine $e$ halts on input $x$ in less than $\len(\sigma)$ steps, querying only the bits of $\sigma$. We can then define $e \in X'$ to mean $\ex {\sigma \subset X} \ph_e^\sigma(0) \downarrow$.

Now define $\ph_e^{X'}(x) \downarrow = y$ to mean $\ex {\sigma \subset X'} \ph_e^{\sigma}(x) \downarrow = y$. Using $\ax{B\Sigma_1}$, the assertion
$\sigma \subset X'$ is $\Delta^{0,X}_2$, as is the assertion $\ph_e^{X'}(x) = y$, assuming $\ph_e^{X'}$ is total.
We can then take $\ax{2\dash WWKL}$ to be the principle:
\begin{multline*}
 \fa {X,T} (\mbox{if $T$ is an infinite binary tree computable from $X'$ and} \\
   \lim_{n \to \infty} \frac{|\{\sigma \in T \st \len(\sigma) = n\}|}{2^n} > 0,
   \mbox{there is a path through $T$}).
\end{multline*}
We can express the second premise by saying that for some $\delta > 0$, for every $n$, there exists a finite set of $\sigma$'s of length $n$ in the tree making the sum greater than $\delta$. (More accurately, this expresses that the lim-inf of the expression in question is greater than $0$, which amounts to the same thing, as the expression is nonincreasing in $n$. But note that we do not assume that the limit exists.)

The first concern is also easily met by first considering $\Pi^{0,X}_2$ sets in a particularly nice form. Say that a $\Pi^{0,X}_2$ set $\bigcap_i \bigcup_j B_{i,j}$ is ``strict'' if the sets $\bigcup_j B_{i,j}$ are decreasing, in the sense that whenever $i' \geq i$, for every $j'$ there is a $j$ such that $B_{i',j'} \subseteq B_{i,j}$. Just as $\ax{B\Sigma_1}$ can be used to show that $\Sigma^0_1$ formulas are closed under bounded quantification, it can be used to show that $\Sigma^{0,X}_1$ sets are closed under finite intersections: in the identity
\[
  \bigcap_{i' \leq i} \bigcup_j B_{i',j} = \bigcup_{(j_0, \ldots, j_i)} (B_{0,j_0} \cap \cdots \cap B_{i,j_i}), 
\]
$\ax{B\Sigma_1}$ proves the left-to-right inclusion. Thus $\na{RCA_0}$ proves that every $\Pi^{0,X}_2$ set $\bigcap_i G_i$ is extensionally equivalent to a strict $\Pi^{0,X}_2$ set $\bigcap_i (\bigcap_{i' \leq i} G_{i'})$, and we can interpret references to the measure of $\bigcap_i G_i$ in terms of the measure of its strict equivalent. If $A = \bigcap_i \bigcup_j B_{i,j}$ is a strict $\Pi^{0,X}_2$ subset of $2^\omega$, we can express $\mu(A) \geq r$ as $\fa {i, \varepsilon > 0} \mu(\bigcup_j B_{i,j}) > r - \varepsilon$. In particular, the assertion that $A$ has positive measure is equivalent to $\ex {\delta > 0} \fa i (\mu(\bigcup_j B_{i,j}) > \delta)$. This gives $\ax{2\dash POS}$:
\[
 \fa {X, A \in \Pi^{0,X}_2} (\mu (A) > 0 \limplies \ex Y (Y \in A)).
\]

Our first goal is to show that, over $\na{RCA_0}$, the principles $\ax{2\dash WWKL}$ and $\ax{2\dash POS}$ are equivalent. This requires checking that $\na{RCA_0}$ can prove some fundamental facts about algorithmic randomness.

In the absence of $\ax{B\Sigma_2}$, reasoning about computability relative to the Turing jump $X'$ of a set $X$ is delicate. For example, $\ax{B\Sigma_2}$ is needed to show that if $f(n)$ is computable relative to $X'$ then so are the course-of-values function $g(n) = (f(0),\ldots,f(n-1))$ and the function $h(n) = \max_{i < n} f(i)$. Fortunately, in the presence of $\ax{2\dash WWKL}$, we have $\ax{B\Sigma_2}$ as well.

\begin{proposition}
\label{rand:prop:zero}
 Over $\na{RCA_0}$, $\ax{2\dash WWKL}$ implies $\ax{B\Sigma_2}$.
\end{proposition}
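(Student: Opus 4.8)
The plan is to argue by contraposition: assuming $\ax{B\Sigma_2}$ fails, I will construct an infinite binary tree $T$ that is computable from $X'$ and whose normalized level-counts stay above a fixed positive rational, yet which has no path---directly contradicting $\ax{2\dash WWKL}$. The conceptual engine is the observation already built into the setup of this section: a $\Sigma_2$ formula is $\Sigma_1$ relative to $X'$, so a failure of $\ax{B\Sigma_2}$ is a failure of $\ax{B\Sigma_1}$ relative to the jump, and it is exactly such a failure that lets the body of a tree become empty while positive lower density is maintained at every level.

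Concretely, suppose the principle fails: there are a $\Sigma_2$ formula $\ph(x,y)$, with set parameter $X$, and a bound $u$ such that $\fa{x<u}\ex{y}\ph(x,y)$ holds while $\fa{v}\ex{x<u}\fa{y<v}\neg\ph(x,y)$. Writing $\ph$ in the normal form $\ph(x,y) \liff \ex{z} R(x,y,z)$, the matrix $R$ is $\Pi^{0,X}_1$, so its truth is decided by $X'$ uniformly in $(x,y,z)$. Define $W_n = \{\, x < u \st \ex{y<n}\ex{z<n} R(x,y,z)\,\}$, a bounded search over an $X'$-decidable predicate; this is $X'$-decidable, and at this point $\na{RCA_0}$ needs only $\ax{B\Sigma_1}$ (closure of $\Sigma_1$ formulas under bounded quantification). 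The sequence $(W_n)$ is nondecreasing with $W_n \subseteq [0,u)$, and the two halves of the failure translate into two facts: (i) every $x < u$ enters $W_n$ for some $n$ (from the positive premise), and (ii) $W_n \neq [0,u)$ for every $n$ (since $W_n = [0,u)$ would yield the collection bound $v = n$, contradicting the negative premise).

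Now fix $k = \lceil \log_2 u \rceil$ and, reading the first $k$ bits of a string $\sigma$ as a number $x_\sigma$, declare $\sigma \in T$ iff either $|\sigma| < k$, or else $x_\sigma < u$ and $x_\sigma \notin W_{|\sigma|}$. Since $(W_n)$ is nondecreasing and $x_\sigma$ depends only on the first $k$ bits, $T$ is closed under initial segments, hence a tree, and membership is decided by finitely many $X'$-queries, so $T$ is computable from $X'$; I present it by such an index rather than as a set obtained by comprehension. By (ii), for each $n \geq k$ the tree contains every string whose first $k$ bits code some $x \in [0,u) \setminus W_n$, so the level-$n$ nodes cover a clopen set of measure at least $2^{-k}$, computed as a finite sum over length-$k$ cylinders. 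Thus $T$ is infinite and its normalized level-counts stay above the fixed rational $\delta = 2^{-k}$, meeting the measure hypothesis of $\ax{2\dash WWKL}$. Applying $\ax{2\dash WWKL}$ then produces a path $Y$; reading off $x_Y < u$ from its first $k$ bits, the path condition forces $x_Y \notin W_n$ for all $n \geq k$, hence for all $n$ by monotonicity, so requirement $x_Y$ is never satisfied, contradicting (i). This contradiction establishes $\ax{B\Sigma_2}$.

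I expect the main work to be bookkeeping to keep everything inside $\na{RCA_0}$ rather than any single hard step: one must check that forming $W_n$ and deciding $T$ use only bounded search over $X'$-decidable predicates, so that $\ax{B\Sigma_1}$ suffices and $\ax{B\Sigma_2}$ is nowhere assumed; that the positive-measure premise is witnessed by the explicit rational $2^{-k}$; and that $T$ is treated as an $X'$-computable object in the exact sense of the principle, rather than via a comprehension axiom the base theory lacks. The one genuinely conceptual point is the matching of the two halves of the collection failure to the two competing demands on $T$: premise (i), that every requirement is eventually met, is precisely what empties the body of the tree, while premise (ii), that no single stage meets all requirements at once, is precisely what keeps every level above measure $2^{-k}$.
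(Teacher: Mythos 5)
Your proposal is correct and is essentially the paper's own argument: both code the $u$ (resp.\ $a$) many requirements into the first $k$ bits of a string and prune each branch once a witness appears, so that the density stays at $2^{-k}$ exactly as long as some requirement remains unmet. The only differences are cosmetic --- you run the argument contrapositively (a failure of collection yields a positive-density pathless tree, hence a path and a contradiction) where the paper extracts the collection bound directly from the level at which the density drops, and you bound the $\Sigma_2$ witness pair $(y,z)$ explicitly where the paper works with a $\Pi^0_1$ matrix via the standard equivalence of $\ax{B\Sigma_2}$ with $\Pi_1$-collection.
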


\begin{proof}
  Arguing in $\na{RCA_0 + \ax{2\dash WWKL}}$, suppose $\fa {x < a} \ex y \ph(x,y)$ where $\ph$ is a $\Pi^0_1$ formula, possibly with parameters other than the one shown. Via pairing, we can assume there is only one set parameter, $X$. Pick $k$ such that $2^k \geq a$ and let $\sigma_0, \ldots, \sigma_{a-1}$ be the first $a$ binary sequences of length $k$. The idea is to build a tree computable from $X'$ that includes all children of each $\sigma_x$ until they reach a length greater than some $y$ satisfying $\ph(x,y)$. More precisely, let $T$ be the tree defined by putting a sequence $\tau$ in $T$ if and only if $\tau$ is an initial segment of $\sigma_x$, for some $x < a$, or $\tau$ properly extends $\sigma_x$ and $\fa{y < \len(\tau)} \lnot \ph(x,y)$. The hypothesis implies there is no path through the tree, so, by $\ax{2\dash WWKL}$, there is some level $b$ such that the density of nodes of length $b$ in $T$ is less than $2^{-k}$. For any $x < a$, this implies that there is a $y < b$ satisfying $\ph(x,y)$; otherwise, all extensions of $\sigma_x$ of length $b$ would be in $T$, and these have density $2^{-k}$.
\end{proof}

Joseph Miller has pointed out to us that this proof also establishes that the principle $\ax{POS}$ of \cite{kjos:hanssen:et:al:unp}, mentioned in the introduction, also implies $\ax{B\Sigma_2}$.

Our next task is to prove that, over $\na{RCA_0}$, $\ax{2\dash WWKL}$ and $\ax{2\dash POS}$ are equivalent. Essentially, this involves showing that conventional computability-theoretic constructions can be carried out in $\na{RCA_0}$, and that claims regarding measure can be verified in the restricted axiomatic setting.

\begin{proposition}
\label{rand:prop:one}
 $\na{RCA_0}$ proves the following. Let $T$ be a tree computable from $X'$ satisfying the hypothesis of $\ax{2\dash WWKL}$. Then the set of paths through $T$ is a $\Pi^{0,X}_2$ set with positive measure.
\end{proposition}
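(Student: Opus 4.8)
The plan is to read an explicit $\Pi^{0,X}_2$ code for the path set
\[
  [T] = \{ Y \st \fa n (Y \restriction n \in T) \}
\]
directly off the $\Delta^{0,X}_2$ description of membership in $T$, and then to bound its measure from below using the density hypothesis. Since $T$ is computable from $X'$, the relation ``$\sigma \in T$'', being ``$\ph_e^{X'}(\sigma) = 1$'' for the appropriate index $e$, is $\Delta^{0,X}_2$ by the discussion above; here I use that $\ph_e^{X'}$ is total, which is part of the assumption that $T$ is an $X'$-computable tree. In particular it is $\Pi^{0,X}_2$, so I fix a $\Sigma^{0,X}_1$ formula $\eta$ with $\sigma \in T \liff \fa k \eta(\sigma, k)$, where $\eta(\sigma, k)$ abbreviates $\ex m \rho(\sigma, k, m)$ for some $X$-computable $\rho$.

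Each set $\{ Y \st \eta(Y \restriction n, k) \} = \bigcup_m \{ Y \st \rho(Y \restriction n, k, m) \}$ is a $\Sigma^{0,X}_1$ subset of $2^\omega$, since the innermost condition depends only on $Y \restriction n$ and so defines a union of length-$n$ cylinders computable from $X$. Consequently
\[
  [T] = \bigcap_n \{ Y \st Y \restriction n \in T \} = \bigcap_n \bigcap_k \{ Y \st \eta(Y \restriction n, k) \}
\]
exhibits $[T]$ as a countable intersection of $\Sigma^{0,X}_1$ sets, i.e.\ a $\Pi^{0,X}_2$ set. To obtain a representation suited to the measure, I set $W_n = \bigcap_{n' \leq n,\, k \leq n} \{ Y \st \eta(Y \restriction n', k) \}$; by the closure of $\Sigma^{0,X}_1$ sets under finite intersection (provable from $\ax{B\Sigma_1}$, as noted earlier) each $W_n$ is again a $\Sigma^{0,X}_1$ set, the $W_n$ are decreasing, and $\bigcap_n W_n = [T]$, so this is (extensionally) a strict $\Pi^{0,X}_2$ set.

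It remains to show $\mu(W_n) > \delta$ for every $n$, which gives $\mu([T]) > 0$ in the sense of the strict representation. First, $W_n \supseteq \{ Y \st Y \restriction n \in T \}$: if $Y \restriction n \in T$ then, since $T$ is a tree, $Y \restriction n' \in T$ and hence $\fa k \eta(Y \restriction n', k)$ for all $n' \leq n$, so $Y \in W_n$. The density hypothesis, in the form stated for the premise of $\ax{2\dash WWKL}$, supplies a finite set $F_n$ of strings of length $n$ lying in $T$ with $|F_n| \cdot 2^{-n} > \delta$. For each $\sigma \in F_n$ we have $[\sigma] \subseteq W_n$, and this containment is witnessed by finitely many instances of $\rho$, one for each pair $(n', k)$ with $n' \leq n$ and $k \leq n$, obtained from $\sigma \restriction n' \in T$. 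Collecting these finitely many witnesses by $\ax{B\Sigma_1}$ shows that finitely many of the basic open sets in the code of $W_n$ already cover the disjoint union $\bigcup_{\sigma \in F_n} [\sigma]$, whose measure exceeds $\delta$. Hence ``$\mu(W_n) > \delta$'' is a true $\Sigma^{0,X}_1$ statement.

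The main obstacle is to carry out all of this within $\na{RCA_0}$, that is, without $\ax{B\Sigma_2}$. The naive route---approximating $T$ by an $X$-computable sequence $\widehat{T}(\sigma, s)$ converging to $\chi_T(\sigma)$ and invoking the limit lemma---requires more induction than is available; the point of working with the $\Pi^{0,X}_2$ normal form $\fa k \eta(\sigma, k)$ directly is precisely to avoid this. The only nontrivial uses of arithmetic strength are the appeals to $\ax{B\Sigma_1}$: for the closure of $\Sigma^{0,X}_1$ sets under finite intersection, so that each $W_n$ is open, and for extracting the finitely many witnesses needed to certify $\mu(W_n) > \delta$. No appeal to compactness, and hence to $\ax{WKL}$, is needed, since the relevant finite subcovers are produced explicitly from the fact that the strings in $F_n$ belong to $T$.
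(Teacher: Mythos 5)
Your proof is correct, and it reaches the conclusion by a genuinely different device than the paper's. The paper invokes the Shoenfield limit lemma to obtain an $X$-computable $0,1$-valued approximation $f(\sigma,m)$ with $\sigma \in T$ iff $\lim_m f(\sigma,m)=1$, builds approximating trees $T_m$ from it, and takes $G_n = \bigcup\{[\sigma] \st \len(\sigma)=n \land \ex{k>n} \sigma \in T_k\}$, so that each $G_n$ is a single union of length-$n$ cylinders; it then needs a short separate argument (using the downward closure built into the $T_k$) that any $Y$ off $[T]$ eventually falls out of some $G_m$. You instead read a $\Pi^{0,X}_2$ normal form $\fa k \eta(\sigma,k)$ for membership in $T$ directly off the paper's formalization of the jump, and take the decreasing sets $W_n$ to be finite conjunctions of the $\Sigma^{0,X}_1$ slices $\eta$, so that $\bigcap_n W_n = [T]$ is immediate by logic; the price is that forming each $W_n$ requires the closure of $\Sigma^{0,X}_1$ sets under finite intersection (via the distributivity identity and $\ax{B\Sigma_1}$), whereas the paper's $G_n$ are simpler as codes. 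The two arguments are otherwise parallel: both produce a decreasing sequence of $X$-computably open supersets of $[T]$, each containing the level-$n$ cylinders of $T$, and both certify $\mu > \delta$ by the same $\ax{B\Sigma_1}$ collection of finitely many $\Sigma_1$ witnesses so that an initial segment of the code covers the set $\bigcup_{\sigma \in F_n}[\sigma]$ supplied by the density hypothesis. Your appeals to $\ax{B\Sigma_1}$ are exactly where they need to be, and nothing beyond $\na{RCA_0}$ is used.
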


\begin{proof}
 Argue in $\na{RCA_0}$. Let $T$ be as above. By the Shoenfield limit lemma \cite[Theorem 2.6.1]{downey:hirschfeldt:10} there is a $0,1$-valued function $f(\sigma,m)$ computable from $X$ such that for every $\sigma$, $\lim_m f(\sigma,m)$ exists, and $\sigma \in T$ if and only if $\lim_m f(\sigma,m) = 1$. (The proof of the limit lemma can be carried out using $\ax{B\Sigma_1}$.)   

 Define a sequence of trees $(T_m)$ where $\sigma \in T_m$ if $f(\tau, m)=1$ for all $\tau \subseteq \sigma$. By $\ax{B\Sigma_1}$, we have $\sigma \in T$ if and only if for every $k$ there is an $m > k$ such that $\sigma \in T_m$. For each $n$, let $G_n = \bigcup \{ [\sigma] \st \len(\sigma) = n \land \ex {k > n} \sigma \in T_k \}$. Clearly the sequence $(G_n)$ is decreasing. By $\ax{B\Sigma_1}$, each $G_n$ contains $\bigcup \{ [\sigma] \st  \len(\sigma) = n \land \sigma \in T \}$, so $\mu(G_n) \geq \delta$ for each $n$. 

 Hence $\bigcap_n G_n$ is a strict $\Pi^{0,X}_2$ set with positive measure, and it suffices to show that for any $Y$, $Y \in \bigcap_n G_n$ if and only if $Y$ is a path through $T$. Suppose $Y$ is a path through $T$. Then for every $n$, $Y \upharpoonright n$ is in $T$, and hence in $Y \in G_n$. Conversely, suppose $Y$ is not a path through $T$. Then for some $n$, $Y \upharpoonright n$ is not in $T$. Hence, for some $m \geq n$, $Y \upharpoonright n$ is not in $T_k$ for any $k \geq m$. Then $Y \upharpoonright m$ is not in $T_k$ for any $k \geq m$, and so $Y$ is not in $G_m$.
\end{proof}

The next proposition is an effective version of inner regularity for $G_\delta$ sets. The statement refers to the measure of a $\Pi^{0,X'}_1$ set, but we can make sense of this by combining conventions we have already discussed. Specifically, a $\Sigma^{0,X'}_1$ set $A$ is given by a sequence $(B_i)_{i \in \omega}$ of basic open sets computable in $X'$, given, say, by a function with index $e$. Then an element $Y$ of $2^\omega$ is in $A$ if and only if for some $i$ and $\sigma$, $\ph_e^{X'}(i) = \sigma \subset Y$. Hence the expression $Y \in A$ is given by a $\Sigma_2$ formula in $Y$, $X$, and $e$, and we can interpret statements involving the measure of $A$ as before. As before, a $\Pi^{0,X'}_1$ set is just the complement of a $\Sigma^{0,X'}_1$ set.

\begin{proposition}
 \label{rand:prop:two}
 $\na{RCA_0 + \ax{B\Sigma_2}}$ proves the following. Suppose $A$ is a $\Pi^{0,X}_2$ set such that $\mu(A) \geq r$, and $\delta > 0$. Then there is a $\Pi^{0,X'}_1$ set $C \subseteq A$ such that $\mu(C) \geq r - \delta$. 
\end{proposition}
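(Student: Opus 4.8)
\emph{The plan is to} prove an effective inner-regularity statement by approximating each open ``layer'' of $A$ from inside by a clopen set, using the jump $X'$ to locate the measures precisely enough, and then intersecting. Since $\na{RCA_0}$ lets us replace $A$ by an extensionally equal strict set, I will assume $A = \bigcap_i G_i$ with $G_i = \bigcup_j B_{i,j}$ open, $G_0 \supseteq G_1 \supseteq \cdots$, and, from $\mu(A) \geq r$, that $\mu(G_i) \geq r$ for every $i$. Set $\eta_i = \delta 2^{-i-1}$, so that $\sum_{i<m} \eta_i < \delta$ for every $m$. The goal is to produce clopen sets $F_i \subseteq G_i$ with $\mu(G_i \setminus F_i) < \eta_i$, and take $C = \bigcap_i F_i$; then $C \subseteq \bigcap_i G_i = A$, and $C$ will be a $\Pi^{0,X'}_1$ set of measure at least $r - \delta$.

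The role of $X'$ enters only in choosing the $F_i$. Finding a clopen $F_i \subseteq G_i$ that captures all but $\eta_i$ of $G_i$ requires knowing $\mu(G_i)$ to within roughly $\eta_i$; but $\mu(G_i) = \sup_k \mu(\bigcup_{j \leq k} B_{i,j})$ is a left-$X$-c.e.\ real, hence $\Delta^{0,X}_2$: the predicate $\mu(G_i) > q$ is the $\Sigma^{0,X}_1$ statement $\ex k (\mu(\bigcup_{j\leq k} B_{i,j}) > q)$, which $X'$ decides. So, uniformly in $i$, I use $X'$ to compute a rational $s_i$ with $|s_i - \mu(G_i)| < \eta_i/2$, and then run an $X$-computable search for the least $k_i$ with $\mu(\bigcup_{j\leq k_i} B_{i,j}) > s_i - \eta_i/2$; this search halts because $\sup_k \mu(\bigcup_{j\leq k} B_{i,j}) = \mu(G_i) > s_i - \eta_i/2$. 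Put $F_i = \bigcup_{j \leq k_i} B_{i,j}$. Then $F_i \subseteq G_i$ and $\mu(G_i \setminus F_i) < \eta_i$. Since $i \mapsto k_i$ is $X'$-computable, the complement $\overline{C} = \bigcup_i \overline{F_i}$ is $\Sigma^{0,X'}_1$ and $C = \bigcap_i F_i$ is $\Pi^{0,X'}_1$, as required.

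It remains to check $\mu(C) \geq r - \delta$, which unfolds to the assertion that every finite subunion of $\overline{C}$ has measure at most $1-(r-\delta)$; equivalently, that $\mu(D_m) \geq r - \delta$ for every $m$, where $D_m = \bigcap_{i<m} F_i$ is clopen with rational measure. For $m \geq 1$, strictness gives $F_{m-1} \subseteq G_{m-1} \subseteq G_i$ for $i < m$, so $F_{m-1} \setminus F_i \subseteq G_i \setminus F_i$ and hence
\[
 \mu(F_{m-1} \setminus D_m) \leq \sum_{i<m-1} \mu(G_i \setminus F_i) < \sum_{i<m-1} \eta_i .
\]
Combining this with $\mu(F_{m-1}) > r - \eta_{m-1}$, which is immediate from the choice of $k_{m-1}$ together with $\mu(G_{m-1}) \geq r$, yields $\mu(D_m) > r - \sum_{i<m}\eta_i > r - \delta$. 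These are all finitary manipulations of measures of clopen and open sets and go through in $\na{RCA_0}$.

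The main obstacle is not the measure theory, which is classical and finitary once the $F_i$ are fixed, but the reverse-mathematical bookkeeping of the oracle $X'$. Each $k_i$ is individually obtained from a finite run of $X'$-queries, but to present $C$ as a \emph{single} $\Pi^{0,X'}_1$ set I need the map $i \mapsto k_i$, and the course-of-values of the associated $X'$-computations, to exist as a total $X'$-computable function. As noted just before the statement, this is exactly the kind of step --- forming course-of-values and maxima of $X'$-computable functions --- that is not available in $\na{RCA_0}$ alone and for which $\ax{B\Sigma_2}$ is invoked; I expect verifying that the construction is legitimately uniform over $X'$ to be the delicate part of the argument.
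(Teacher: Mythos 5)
Your proposal is correct and follows essentially the same route as the paper's proof: inner-approximate each layer $G_i$ by a clopen finite subunion $F_i$ chosen via $X'$ with error budget $\delta/2^{i+O(1)}$, take $C = \bigcap_i F_i$, and verify the measure bound by finitary manipulations of finite unions using strictness and $\ax{B\Sigma_1}$, with $\ax{B\Sigma_2}$ reserved for presenting $i \mapsto k_i$ (and the resulting description of $C$) as genuinely $X'$-computable. The only substantive divergence is how the cutoff is located: you approximate $\mu(G_i)$ directly by $X'$-queries to the $\Sigma^{0,X}_1$ predicate $\mu(G_i) > q$, which is legitimate provided every occurrence of $\mu(G_i)$ is read intensionally (since $\na{RCA_0}$ does not prove this measure exists as a real number) --- your own unfolding supports that reading --- whereas the paper sidesteps $\mu(G_i)$ altogether by using $\Sigma_1$ induction to find a $K_i$ past which the finite approximations $\mu(\bigcup_{j \leq J} B_{i,j})$ increase by less than $\delta/2^{i+2}$.
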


\begin{proof}
 Let $A = \bigcap_i \bigcup_j B_{i,j}$ be a strict $\Pi^{0,X}_2$ set, where each $B_{i,j}$ is a basic open set. Notice that for every $i$, there is a $K_i$ such that for every $J > K_i$, $\mu(\bigcup_{j \leq J} B_{i,j} \setminus \bigcup_{j \leq K_i} B_{i,j})< \delta / 2^{i+2}$; otherwise, $\Sigma_1$ induction implies that for every $n$ there is a $J$ such that $\mu(\bigcup_{j \leq J} B_{i,j}) > n \delta / 2^{i+2}$, which contradicts the fact that the measure is bounded by $1$. Let $f(i)$ be a function, computable from $X'$, which returns such an $i$.

 Since every finite union of basic open sets is clopen, $C = \bigcap_i \bigcup_{j \leq f(i)} B_{i,j}$ is a closed set, and can be expressed explicitly as a $\Pi^{0,X'}_1$ set $\bigcap_i C_i$. (In more detail, write each $\bigcup_{j \leq f(i)} B_{i,j}$ as an intersection $\bigcap_{j \leq g(i)} \overline D_{i,j}$, where $D_{i,j}$ is a basic open set, and $g(i)$ is computable from $X'$. One needs $\ax{B\Sigma_2}$ to verify that $g(i)$ has the expected properties. We can then write $C = \bigcap_i \bigcap_{j \leq g(i)} \overline D_{i,j}$.) To show $\mu(C) \geq r - \delta$, we need to show that for every $i'$, $\mu(\bigcap_{i \leq i'} C_i) \geq r - \delta$. By $\ax{B\Sigma_1}$, it suffices to show that for every $i'$, $\mu(\bigcap_{i \leq i'} \bigcup_{j \leq f(i)} B_{i,j}) > r - \delta$, because every intersection $\bigcap_{i \leq i'} C_i$ is a superset of a larger intersection of the form $\bigcap_{i \leq i'} \bigcup_{j \leq f(i)} B_{i,j}$.

 Fix $i'$. The hypothesis that $\mu(A) \geq r$ implies that for some $J_1$ large enough, $\mu(\bigcup_{j \leq J_1} B_{i',j}) > r - \delta / 2$. The strictness of $A$ implies that $\bigcup_{j \leq J_1} B_{i',j}$ is included in $\bigcup_j B_{i,j}$ for each $i \leq i'$, and $\ax{B\Sigma_1}$ then implies there is a $J$ such that it is included in $\bigcup_{j \leq J} B_{i,j}$ for each $i \leq i'$. Hence $\mu(\bigcap_{i \leq i'} \bigcup_{j \leq J} B_{i,j}) > r - \delta / 2$.

 But now we are reduced to manipulations with finite unions and intersections. We have
\[
\bigcap_{i \leq i'} \bigcup_{j \leq J} B_{i,j} \setminus \bigcap_{i \leq i'} \bigcup_{j \leq f(i)} B_{i,j} \subseteq \bigcup_{i \leq i'} \left(\bigcup_{j \leq J} B_{i,j} \setminus \bigcup_{j \leq f(i)} B_{i,j}\right)
\]
 and the measure of this last set is less than $\sum_{i \leq i'} \delta / 2^{i+2} < \delta / 2$. Hence we have $\mu(\bigcap_{i \leq i'} \bigcup_{j \leq f(i)} B_{i,j}) > r - \delta$, as required.
\end{proof}

\begin{proposition}
\label{two:wwkl:pos:equiv}
 Over $\na{RCA_0}$, $\ax{2\dash WWKL}$ and $\ax{2\dash POS}$ are equivalent.
\end{proposition}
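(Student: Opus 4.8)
The plan is to prove the two implications separately, leaning on Propositions~\ref{rand:prop:zero}, \ref{rand:prop:one}, and~\ref{rand:prop:two}. The direction $\ax{2\dash POS} \limplies \ax{2\dash WWKL}$ is the easy one: given a tree $T$ computable from $X'$ that satisfies the hypothesis of $\ax{2\dash WWKL}$, Proposition~\ref{rand:prop:one} shows in $\na{RCA_0}$ that the set of paths through $T$ is a $\Pi^{0,X}_2$ set of positive measure, and $\ax{2\dash POS}$ then delivers an element $Y$ of this set, which is by definition a path through $T$.

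For the converse, $\ax{2\dash WWKL} \limplies \ax{2\dash POS}$, I would argue in $\na{RCA_0 + \ax{2\dash WWKL}}$. By Proposition~\ref{rand:prop:zero} we already have $\ax{B\Sigma_2}$ available, which is exactly what is needed to invoke Proposition~\ref{rand:prop:two}. So let $A$ be a strict $\Pi^{0,X}_2$ set with $\mu(A) > 0$; unwinding the definition, fix $\delta > 0$ with $\mu(A) \geq \delta$. Proposition~\ref{rand:prop:two}, applied with $r = \delta$ and tolerance $\delta/2$, then yields a $\Pi^{0,X'}_1$ set $C \subseteq A$ with $\mu(C) \geq \delta/2 > 0$. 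It now suffices to extract a single element of $C$, since any such element lies in $A$.

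To find an element of $C$, I would carry out the Yu--Simpson implication $\ax{WWKL} \limplies \ax{1\dash POS}$ (part of Theorem~\ref{eq:thm:one}) relativized to the jump $X'$. Concretely, write the complement of $C$ as a $\Sigma^{0,X'}_1$ set $\bigcup_i B_i$ and set $U_s = \bigcup_{i \leq s} B_i$, an increasing sequence of clopen sets computable from $X'$. Let $T = \{ \sigma \st [\sigma] \not\subseteq U_{\len(\sigma)} \}$. A short check shows $T$ is a tree computable from $X'$ whose set of paths is exactly $C$: if $Y \in C$ then $Y \notin U_n$ for all $n$, so $Y \upharpoonright n \in T$; and if $Y \notin C$ then $Y \in U_s$ for some $s$, whence $[Y \upharpoonright n] \subseteq U_n$ for all large $n$ and $Y$ leaves $T$. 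Since every $Y \in C$ has $Y \upharpoonright n \in T$, the set $C$ is covered by the length-$n$ cylinders of $T$, so the level-$n$ density of $T$ is at least $\mu(C) \geq \delta/2$, and $T$ meets the hypothesis of $\ax{2\dash WWKL}$. Applying $\ax{2\dash WWKL}$ produces a path $Y$ through $T$, and hence $Y \in C \subseteq A$, as required.

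The main obstacle is the bookkeeping in this last step: one must verify that $T$ is genuinely computable from $X'$ and that the density estimate goes through in the weak setting. The key observations are that membership of a cylinder in the $\Sigma^{0,X'}_1$ set $\bigcup_i B_i$ is a $\Sigma_2$ condition, so that deciding $[\sigma] \subseteq U_{\len(\sigma)}$ --- and thus building $T$ --- is computable in $X'$ by a bounded search justified by $\ax{B\Sigma_2}$, exactly as in the finite-union manipulations of Propositions~\ref{rand:prop:one} and~\ref{rand:prop:two}; and that the inequality between the level densities and $\mu(C)$ is a statement about finite clopen sets needing only $\ax{B\Sigma_1}$. Once these routine verifications are recorded, both implications are complete.
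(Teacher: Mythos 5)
Your proposal is correct and follows the same route as the paper: the forward direction via Proposition~\ref{rand:prop:one}, and the converse by combining Propositions~\ref{rand:prop:zero} and~\ref{rand:prop:two} to extract a positive-measure $\Pi^{0,X'}_1$ subset and then reducing that closed set to a tree for $\ax{2\dash WWKL}$. The only difference is that the paper cites Yu--Simpson for this last reduction, whereas you carry it out explicitly (correctly, including the $\ax{B\Sigma_2}$ bookkeeping for computability from $X'$).
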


\begin{proof}
 First, assume $\ax{2\dash POS}$, and let $T$ be any tree computable from $X'$ that satisfies the hypothesis of $\ax{2\dash WWKL}$. By Proposition~\ref{rand:prop:one} the set of paths through $T$ is a $\Pi^{0,X}_2$ set with positive measure, and so, by $\ax{2\dash POS}$, has an element. 

  In the other direction, assume $\ax{2\dash WWKL}$, and let $A = \bigcup_i G_i$ be any strict $\Pi^{0,X}_2$ set such that $\mu(A) \geq \delta > 0$. Using Propositions~\ref{rand:prop:zero} and \ref{rand:prop:two}, there is a $\Pi^{0,X'}_1$ set $C \subseteq A$ such that $\mu(C) \geq \delta / 2$. By the usual reduction of closed sets to trees \cite{yu:simpson:90} we get a tree $T$ satisfying the hypotheses of $\ax{2\dash WWKL}$, such that any path through $T$ is an element of $C$ and hence $A$.
\end{proof}

We now turn to formalized notions of 2-randomness. Within the language of second-order arithmetic, define a \emph{$\Sigma^{0,X}_2$-test} to be a uniformly computable sequence $(A_i)_{i \in \omega}$ of $\Sigma^{0,X}_2$ sets such that for each $i$, $\mu(A_i) \leq 2^{-i}$. A set $Y$ is \emph{2-random relative to $X$} if for every $\Sigma^{0,X}_2$ test $(A_i)$ relative to $X$, $Y \not\in \bigcap_i A_i$, that is, $\ex i (Y \not\in A_i)$. Let $\ax{2\dash RAN}$ be the following principle:
\[
 \fa X \ex Y (\mbox{$Y$ is 2-random relative to $X$}).
\]
Once again, the fact that for every $X$ there is a universal $\Sigma^{0,X}_2$ test $(U_i)$ can be proved straightforwardly in $\na{RCA_0}$.

\begin{proposition}
\label{wwkl:ran:prop}
Over $\na{RCA_0}$, $\ax{2\dash WWKL}$ is equivalent to $\ax{2\dash RAN} +\ax{B\Sigma_2}$.
\end{proposition}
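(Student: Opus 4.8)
The plan is to establish both implications, leaning on the equivalence of $\ax{2\dash WWKL}$ and $\ax{2\dash POS}$ from Proposition~\ref{two:wwkl:pos:equiv} and on the inner regularity result of Proposition~\ref{rand:prop:two}, and to model the argument on the proof of Theorem~\ref{eq:thm:one} relativized to the jump $X'$.

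For the forward direction, I would assume $\ax{2\dash WWKL}$. Proposition~\ref{rand:prop:zero} already supplies $\ax{B\Sigma_2}$, so it remains to derive $\ax{2\dash RAN}$, and by Proposition~\ref{two:wwkl:pos:equiv} I may work instead with $\ax{2\dash POS}$. Given $X$, take the universal $\Sigma^{0,X}_2$ test $(U_i)$ and observe that the complement of $U_1$ is a $\Pi^{0,X}_2$ set of measure at least $1/2$ (after rewriting it in strict form and using the bound $\mu(U_1) \leq 1/2$ built into the test). By $\ax{2\dash POS}$ this set has an element $Y$; since $(U_i)$ is universal and $Y \notin U_1 \supseteq \bigcap_i U_i$, the point $Y$ is 2-random relative to $X$. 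This is exactly the easy half of Theorem~\ref{eq:thm:one}, with $\Sigma^{0,X}_2$ in place of $\Sigma^{0,X}_1$.

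For the converse, assume $\ax{2\dash RAN} + \ax{B\Sigma_2}$ and aim for $\ax{2\dash POS}$. Let $A$ be a strict $\Pi^{0,X}_2$ set with $\mu(A) \geq \delta > 0$. First I would invoke Proposition~\ref{rand:prop:two}, available since we have $\ax{B\Sigma_2}$, to produce a $\Pi^{0,X'}_1$ set $C \subseteq A$ with $\mu(C) \geq \delta/2 > 0$, reducing the problem to finding an element of a set closed \emph{relative to $X'$} with positive measure. I would then rerun the second half of the proof of Theorem~\ref{eq:thm:one}, relativized to $X'$: write $\overline C = \bigcup_i [\sigma_i]$ with the $[\sigma_i]$ disjoint and $\mu(\overline C) < \delta' < 1$, and use $\ax{2\dash RAN}$ to obtain a $Y$ that is 2-random relative to $X$, hence 1-random relative to $X'$ (the identification resting on the correspondence between $\Sigma^{0,X'}_1$ sets and $\Sigma^{0,X}_2$ sets, and between Martin-L\"of tests relative to $X'$ and $\Sigma^{0,X}_2$ tests). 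Splitting $Y$ into columns $Y_i = \pi_i Y$, I would show that if every $Y_i$ lay in $\overline C$ then $Y$ would belong to the decreasing sets $G_n = \{W \st \fa{i \leq n} (W_i \in \overline C)\}$, which are open relative to $X'$ and of measure at most $(\delta')^n$, contradicting the 1-randomness of $Y$ relative to $X'$. Hence some $Y_i$ lies in $C \subseteq A$, and combining with Proposition~\ref{two:wwkl:pos:equiv} completes the equivalence.

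The main obstacle is the bookkeeping in this last step within the weak theory. The collection step passing from ``$\fa i \ex j\, Y_i \in [\sigma_j]$'' to the uniform bound defining $G_n$ becomes, after relativization, an instance of $\Sigma^{0,X}_2$-collection rather than the $\Sigma^{0,X}_1$-collection that sufficed for Theorem~\ref{eq:thm:one}, which is precisely why $\ax{B\Sigma_2}$ is needed here. I must also check that the $X'$-computable data --- the enumeration $(\sigma_i)$ of $\overline C$, the sets $G_n$, and their thinning to a test --- are manipulated legitimately relative to $X'$, again relying on $\ax{B\Sigma_2}$ to secure the course-of-values and maximum functions computable from $X'$ flagged before Proposition~\ref{rand:prop:zero}. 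Once the relativized product-measure estimate $\mu(G_n) \leq (\delta')^n$ is verified in $\na{RCA_0} + \ax{B\Sigma_2}$, the thinned sequence is a $\Sigma^{0,X}_2$ test avoided by $Y$, delivering the desired element.
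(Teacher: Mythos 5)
Your proposal is correct and follows essentially the same route as the paper: the forward direction uses Propositions~\ref{rand:prop:zero} and \ref{two:wwkl:pos:equiv} together with the complement of $U_1$ in a universal $\Sigma^{0,X}_2$ test, and the converse uses Proposition~\ref{rand:prop:two} to pass to a $\Pi^{0,X'}_1$ subset of positive measure and then reruns the column-splitting argument of Theorem~\ref{eq:thm:one} relativized to $X'$, with $\ax{B\Sigma_2}$ supplying the collection step. The paper compresses the last step to a single sentence (``As in the proof of Theorem~\ref{eq:thm:one}\dots''), so your explicit bookkeeping about the relativized collection and the $\Sigma^{0,X}_2$-test estimate is just a fuller rendering of the same argument.
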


\begin{proof}
  By Proposition~\ref{two:wwkl:pos:equiv} we can use $\ax{2\dash POS}$ in place of $\ax{2\dash WWKL}$. In the forward direction, we already know that $\ax{2\dash WWKL}$ implies $\ax{B\Sigma_2}$, by Proposition~\ref{rand:prop:zero}. To obtain $\ax{2\dash RAN}$, fix $X$, and let $(U_i)_{i \in \omega}$ be a universal $\Sigma^{0,X}_2$ test. Then the complement of $U_1$ is a $\Pi^{0,X}_2$ set with positive measure, so by $\ax{2\dash POS}$, there is an element $Y$ in the complement of $U_1$, which is hence 2-random relative to $X$.

  For the other direction, let us show that $\na{RCA_0 + \ax{B\Sigma_2} + \ax{2\dash RAN}}$ proves $\ax{2\dash POS}$. Fix a $\Pi^{0,X}_2$ set $A$ with positive measure. By Proposition~\ref{rand:prop:two}, there is a subset $B$ of $A$ which is a $\Pi^{0,X'}_1$ set of positive measure. Let $Y$ be 2-random relative to $X$. As in the proof of Theorem~\ref{eq:thm:one}, $\ax{B\Sigma_2}$ implies that for some $i$, $\pi_i Y$ is in $B$ and hence $A$.
\end{proof}

The main conclusions of this section are summarized as follows:

\begin{theorem} 
\label{main:rand:theorem} 
Over $\na{RCA_0}$, the following are equivalent:
\begin{enumerate}
 \item $\ax{2\dash WWKL}$ 
 \item $\ax{2\dash POS}$
 \item $\ax{B\Sigma_2} + \ax{2\dash RAN}$
\end{enumerate}
\end{theorem}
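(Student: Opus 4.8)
The plan is to observe that Theorem~\ref{main:rand:theorem} is nothing more than a repackaging of the three propositions that immediately precede it, so the proof should be short and should cite each component rather than reprove anything. The three equivalences I would assemble are: the equivalence of $\ax{2\dash WWKL}$ and $\ax{2\dash POS}$ (Proposition~\ref{two:wwkl:pos:equiv}), the equivalence of $\ax{2\dash WWKL}$ and $\ax{2\dash RAN} + \ax{B\Sigma_2}$ (Proposition~\ref{wwkl:ran:prop}), and transitivity of logical equivalence over the base theory $\na{RCA_0}$. So the entire content of the theorem is a chaining of two already-established biconditionals.

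Concretely, I would argue as follows. Proposition~\ref{two:wwkl:pos:equiv} gives, over $\na{RCA_0}$, that (1) $\ax{2\dash WWKL}$ and (2) $\ax{2\dash POS}$ are equivalent. Proposition~\ref{wwkl:ran:prop} gives, over $\na{RCA_0}$, that (1) $\ax{2\dash WWKL}$ and (3) $\ax{B\Sigma_2} + \ax{2\dash RAN}$ are equivalent. Since equivalence over a fixed base theory is transitive, (1), (2), and (3) are pairwise equivalent over $\na{RCA_0}$, which is exactly the statement of the theorem. I would write this out in one or two sentences, citing the two propositions by number.

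There is no real obstacle here: all the genuine mathematical work — the reduction of $\ax{2\dash POS}$ to trees and back (Proposition~\ref{rand:prop:one}), the effective inner regularity for $G_\delta$ sets (Proposition~\ref{rand:prop:two}), the derivation of $\ax{B\Sigma_2}$ from $\ax{2\dash WWKL}$ (Proposition~\ref{rand:prop:zero}), and the independence/product-measure arguments used in Proposition~\ref{wwkl:ran:prop} — has already been carried out in the preceding results. The only thing to be careful about is to cite the correct propositions and to make explicit that the base theory is the same ($\na{RCA_0}$) across all the equivalences being chained, so that transitivity is legitimate; in particular $\ax{B\Sigma_2}$ is carried as an explicit conjunct in item (3) rather than assumed, which is consistent with Propositions~\ref{rand:prop:zero} and~\ref{wwkl:ran:prop}. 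The proof I would write is therefore essentially:

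\begin{proof}
 This is an immediate consequence of Propositions~\ref{two:wwkl:pos:equiv} and~\ref{wwkl:ran:prop}. By Proposition~\ref{two:wwkl:pos:equiv}, over $\na{RCA_0}$ the principles $\ax{2\dash WWKL}$ and $\ax{2\dash POS}$ are equivalent, establishing the equivalence of (1) and (2). By Proposition~\ref{wwkl:ran:prop}, over $\na{RCA_0}$ the principle $\ax{2\dash WWKL}$ is equivalent to $\ax{B\Sigma_2} + \ax{2\dash RAN}$, establishing the equivalence of (1) and (3). Since equivalence over $\na{RCA_0}$ is transitive, (1), (2), and (3) are pairwise equivalent.
\end{proof}
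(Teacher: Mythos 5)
Your proposal is correct and matches the paper exactly: the theorem is stated as a summary of the preceding results ("The main conclusions of this section are summarized as follows"), with no separate proof given, the content being precisely Propositions~\ref{two:wwkl:pos:equiv} and~\ref{wwkl:ran:prop} chained by transitivity. Your one-paragraph citation-based proof is the intended argument.
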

The results of Slaman and Conidis mentioned at the end of the introduction then imply that all these principles are conservative over $\na{RCA_0 + \ax{B\Sigma_2}}$ for $\Pi^1_1$ sentences. Slaman and Conidis have posed the question as to whether $\ax{2\dash RAN}$ implies $\ax{B\Sigma_2}$ over $\na{RCA_0}$, or, equivalently, whether $\ax{2\dash RAN}$ implies $\ax{2\dash WWKL}$.

Recall that $\ax{WKL}$ is the axiom that asserts that every infinite tree on $\{0,1\}$ has a path. 

\begin{theorem}
\label{hier:thm}
Over $\na{RCA_0}$, each of $\ax{2\dash WWKL}$ and $\ax{WKL}$ implies $\ax{WWKL}$, but both implications are strict. Moreover, $\ax{2\dash WWKL}$ doesn't imply $\ax{WKL}$, and $\ax{WKL}$ doesn't imply $\ax{2\dash WWKL}$.
\end{theorem}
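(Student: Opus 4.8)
The plan is to treat the two implications directly and then to establish the three separations, grouping them by the technique they require. The implication $\ax{WKL} \limplies \ax{WWKL}$ holds because a tree of positive lower density is automatically infinite, so the hypothesis of $\ax{WWKL}$ is a special case of that of $\ax{WKL}$ while the conclusion is identical; this is essentially the observation of Yu and Simpson \cite{yu:simpson:90}. For $\ax{2\dash WWKL} \limplies \ax{WWKL}$, given any binary tree $T$ of positive lower density I would set $X = T$; then $T$ is computable from $X$, and a fortiori (intensionally) from $X'$, so $\ax{2\dash WWKL}$ supplies a path through $T$.

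For the two separations asserting that $\ax{2\dash WWKL}$ is \emph{not} implied, namely $\ax{WWKL} \not\limplies \ax{2\dash WWKL}$ (the strictness of the first implication) and $\ax{WKL} \not\limplies \ax{2\dash WWKL}$, I would argue purely at the first-order level. By Proposition~\ref{rand:prop:zero}, $\na{RCA_0} + \ax{2\dash WWKL}$ proves $\ax{B\Sigma_2}$. On the other hand, since $\na{RCA_0} \subseteq \na{RCA_0} + \ax{WWKL} \subseteq \na{WKL_0}$ and $\na{WKL_0}$ is $\Pi^1_1$-conservative over $\na{RCA_0}$ (Harrington's theorem; see \cite{simpson:99}), all three theories have the same first-order consequences, namely those of $\na{RCA_0}$, whose first-order part is $\ax{I\Sigma_1}$. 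As $\ax{B\Sigma_2}$ is a first-order sentence not provable in $\ax{I\Sigma_1}$, neither $\ax{WWKL}$ nor $\ax{WKL}$ proves $\ax{B\Sigma_2}$, and hence neither proves $\ax{2\dash WWKL}$.

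The remaining separation, $\ax{2\dash WWKL} \not\limplies \ax{WKL}$, together with the strictness of $\ax{WKL} \limplies \ax{WWKL}$, I would obtain from a single $\omega$-model. I would build a Turing ideal $\mdl I = \{ Z \st Z \leq_T X_n \text{ for some } n \}$ along a tower in which $X_0$ is a 2-random real with $\emptyset' \not\leq_T X_0$ and $X_{n+1} = X_n \oplus Y_n$, where at each stage $Y_n$ is chosen 2-random relative to $X_n$ with $\emptyset' \not\leq_T X_{n+1}$. Such a $Y_n$ exists because the sets 2-random relative to $X_n$ have measure one, whereas by Sacks' theorem $\{ Y \st \emptyset' \leq_T X_n \oplus Y \}$ is null whenever $\emptyset' \not\leq_T X_n$, an invariant maintained throughout \cite{downey:hirschfeldt:10}. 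By van Lambalgen's theorem each $X_n$ is then 1-random, so by Stephan's theorem a 1-random set with $\emptyset' \not\leq_T X_n$ is not of PA degree; since PA degree is closed upward, no member of $\mdl I$ is of PA degree, and the computable tree of completions of PA witnesses the failure of $\ax{WKL}$ in $\mdl I$. Conversely $\mdl I$ satisfies $\ax{2\dash RAN}$, since any $Z \in \mdl I$ lies below some $X_n$ and $Y_n \in \mdl I$ is 2-random relative to $X_n$ and hence relative to $Z$; being an $\omega$-model, $\mdl I$ also satisfies $\ax{B\Sigma_2}$, so by Theorem~\ref{main:rand:theorem} it models $\ax{2\dash WWKL}$. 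As $\mdl I$ is thereby also a model of $\ax{WWKL}$ that fails $\ax{WKL}$, the same construction simultaneously witnesses the strictness of $\ax{WKL} \limplies \ax{WWKL}$, recovering the result of Yu and Simpson \cite{yu:simpson:90}.

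The main obstacle is the bookkeeping in this last construction: one must verify that at each stage a \emph{single} $Y_n$ can be found that is 2-random relative to $X_n$, keeps $\emptyset'$ out of the Turing degree of $X_{n+1}$, and preserves 1-randomness of the running join, and that the relevant measure-one and measure-zero conditions genuinely overlap. It is precisely the interplay of Sacks' theorem, van Lambalgen's theorem, and Stephan's characterization of the random sets of PA degree that lets $\mdl I$ both satisfy $\ax{2\dash RAN}$ and avoid PA degrees; the closure and first-order verifications are then routine.
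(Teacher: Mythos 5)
Your proof is correct, but it diverges from the paper's in how it obtains the separations over $\ax{2\dash WWKL}$. For $\ax{WKL} \not\limplies \ax{2\dash WWKL}$ (and hence the strictness of $\ax{2\dash WWKL} \limplies \ax{WWKL}$), the paper argues with an $\omega$-model: by the low basis theorem there is an $\omega$-model of $\ax{WKL}$ in which every set is low, hence $\Delta^0_2$, and no $\Delta^0_2$ set is 2-random. You instead argue proof-theoretically, combining Proposition~\ref{rand:prop:zero} with Harrington's $\Pi^1_1$-conservativity of $\na{WKL_0}$ over $\na{RCA_0}$ and the Kirby--Paris fact that $\ax{I\Sigma_1} \nvdash \ax{B\Sigma_2}$. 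Both arguments are valid, and each buys something the other does not: the paper's shows the separation persists among $\omega$-models (so it is not merely an artifact of weak first-order induction), while yours shows that $\ax{WKL}$ fails to prove $\ax{2\dash WWKL}$ already at the level of first-order consequences. For $\ax{2\dash WWKL} \not\limplies \ax{WKL}$ the paper simply cites the model $\mdl M$ of Yu and Simpson \cite{yu:simpson:90}, noting it satisfies $\ax{2\dash WWKL}$ but not $\ax{WKL}$; your tower of relative 2-randoms, with $\emptyset' \not\leq_T X_n$ maintained via Sacks' theorem and non-PA-ness secured via van Lambalgen and Stephan, is essentially a self-contained reconstruction of that model, trading a citation for explicit (and correctly executed) bookkeeping. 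The two easy implications are handled the same way in both treatments.
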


\begin{proof}
To see that $\ax{WKL}$ doesn't imply $\ax{2\dash WWKL}$, notice that the low basis theorem implies that there is an $\omega$-model of $\ax{WKL}$ in which every set is low, and so, in particular, $\Delta^0_2$ \cite{hajek:pudlak:93,simpson:99}. On the other hand, no $\Delta^0_2$ set is 2-random (see \cite[Section 6.8]{downey:hirschfeldt:10}). Finally, the model $\mdl M$ constructed by Yu and Simpson \cite[Section 2]{yu:simpson:90} satisfies $\na{RCA_0 + \ax{2\dash WWKL}}$, but not $\ax{WKL}$. (In fact, that model satisfies more; see Section~\ref{n:randomness:section}.)
\end{proof}

\section{Convergence theorems and reverse mathematics}
\label{reverse:section}

We now turn to the formalization of convergence theorems in reverse mathematics. We need to consider measures on an arbitrary compact separable metric space, along the lines of \cite{simpson:99,yu:93,yu:94,yu:simpson:90}. (The development there is closely related to the treatment of these notions in computable analysis, along the lines of \cite{hoyrup:rojas:09b,weihrauch:99}.) In particular, a compact separable metric space $(X, d)$ is assumed to be represented by a countable dense set of ideal points, and elements of the space are named by Cauchy sequences with an explicit rate of convergence. Open and closed sets, and more generally $\Sigma^{0,X}_n$ and $\Pi^{0,X}_n$ sets for any $X$ and $n$, are defined as in Section~\ref{randomness:section}, where now the basic open sets $B_i$ are balls $B(a,\delta)$, where $a$ is an ideal point and $\delta$ is rational. The space $C(X,\RR)$ of continuous functions on $X$ can be represented as the closure of a set of particularly simple ``test functions'' (called ``polynomials'' in \cite{simpson:99,yu:93,yu:94,yu:simpson:90}) under the uniform norm. 

A finite measure $\mu$ on $(X, d)$ is given in terms of the values of the integral $\int f \; d\mu$ on test functions; the space $L^1(\mu)$ is then defined as in Section~\ref{randomness:section}, replacing the simple functions there with test functions. Without loss of generality, we will assume $\mu(X) = 1$. Operations like pointwise addition and integration are defined straightforwardly and their basic properties can be established in $\na{RCA_0}$. Also, if $(f_n)$ is the name of an element $f$ of $L^1(\mu)$ and $k$ is any rational number, $(\min(f_n,k))$ and $(\max(f_n,k))$ are names for $\max(f,k)$ and $\min(f,k)$. Below we will rely on the fact that in $\na{RCA_0}$ these operations have the expected properties, such as $\min(f,k) + \max(f,k) = f + k$.

The measure of an open set $A$ is now defined as the supremum of the measure of test functions that are bounded by 1 and vanish outside of $A$. In contrast to the case with $2^\omega$, the measure of a basic open set need not be computable from $\mu$. But, as in Section~\ref{randomness:section}, if $A$ is $\Sigma^{0,X}_1$, the predicate $\mu(A) > \delta$ is $\Sigma^{0,X}_1$ definable. Randomness notions from Section~\ref{randomness:section} are easily adapted to this more general setting; see \cite{hoyrup:rojas:09b,simpson:99,yu:93,yu:94,yu:simpson:90} for details.

Let $\ax{G_\delta\dash POS}$ be the axiom that says that for every compact separable metric space $(X, d)$ and measure $\mu$, every $\Pi^{0,X}_2$ set with positive measure has an element. To link this up with axioms discussed in Section~\ref{formalization:section}, we only need the following two propositions. The first generalizes Proposition~\ref{rand:prop:two}.

\begin{proposition}
 \label{general:pos:prop}
 $\na{RCA_0 + \ax{B\Sigma_2}}$ proves the following. Fix a measure $\mu$ on a compact separable metric space $(X, d)$. Suppose $A$ is a $\Pi^{0,X}_2$ set such that $\mu(A) \geq r$, and $\delta > 0$. Then there is a $\Pi^{0,X'}_1$ set $C \subseteq A$ such that $\mu(C) \geq r - \delta$. 
\end{proposition}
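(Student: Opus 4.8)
The plan is to adapt the proof of Proposition~\ref{rand:prop:two} to the more general setting of an arbitrary measure $\mu$ on a compact separable metric space, taking care to identify exactly where the special structure of $2^\omega$ was used and to replace it with facts that hold in the general setting. In the Cantor space case, the argument proceeded by writing the strict $\Pi^{0,X}_2$ set as $A = \bigcap_i \bigcup_j B_{i,j}$, finding for each $i$ a level $f(i)$ beyond which the tails $\bigcup_{j \leq J} B_{i,j} \setminus \bigcup_{j \leq f(i)} B_{i,j}$ have measure less than $\delta/2^{i+2}$, and then taking $C = \bigcap_i \bigcup_{j \leq f(i)} B_{i,j}$. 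The only two features of $2^\omega$ that were actually exploited are: that the measure of finite unions of basic open sets is available and well-behaved, and that a finite union of basic open sets is clopen, so that $C$ can be re-expressed as a $\Pi^{0,X'}_1$ set. So first I would recall the conventions from the start of Section~\ref{reverse:section}: even in the general setting, if a set is a finite union of basic open balls $B(a,\delta)$, the predicate $\mu(\bigcup_{j \leq J} B_{i,j}) > s$ is $\Sigma^{0,X}_1$-definable, so all the measure comparisons and the existence of $f(i)$ via the boundedness argument go through verbatim.

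Next I would run the identical selection of $f(i)$: for each $i$, using the fact that $\mu$ is bounded by $1$ together with $\Sigma_1$ induction, there is a $K_i$ such that for all $J > K_i$ the measure increment $\mu(\bigcup_{j \leq J} B_{i,j} \setminus \bigcup_{j \leq K_i} B_{i,j})$ is less than $\delta/2^{i+2}$, and let $f(i)$, computable from $X'$, return such a value. The outer estimate that $\mu(\bigcap_{i \leq i'} \bigcup_{j \leq f(i)} B_{i,j}) > r - \delta$ for every $i'$ is then obtained exactly as before: strictness of $A$ and $\mu(A) \geq r$ give a single $J$ with $\mu(\bigcap_{i \leq i'} \bigcup_{j \leq J} B_{i,j}) > r - \delta/2$, and the finite set-difference bound $\sum_{i \leq i'} \delta/2^{i+2} < \delta/2$ finishes the estimate. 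This portion is purely about finite unions and intersections of basic open sets and their measures, so nothing special about $2^\omega$ enters.

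The step requiring genuine care, and the one I expect to be the main obstacle, is re-expressing $C = \bigcap_i \bigcup_{j \leq f(i)} B_{i,j}$ as a $\Pi^{0,X'}_1$ set. In $2^\omega$ this was immediate because a finite union of cylinders is clopen, hence its complement is again a finite union of basic open sets; in a general compact separable metric space a finite union of open balls need not be closed, so one cannot simply take complements of balls. The fix is to recall that in the reverse-mathematics development of such spaces a closed set is named by a sequence of basic open balls constituting its complement, so what I really need is that the complement of $\bigcup_{j \leq f(i)} B_{i,j}$ can be presented, uniformly and computably from $X'$, as an open set, i.e.\ as a $\Sigma^{0,X'}_1$ set of balls. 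This is available because the complement of a finite union of open balls is a closed set whose defining open cover can be enumerated from $X'$ in the standard coding; one then uses $\ax{B\Sigma_2}$, exactly as in Proposition~\ref{rand:prop:two}, to verify that the resulting index function $g(i)$ is total and has the expected properties, so that $C$ is a bona fide $\Pi^{0,X'}_1$ set $\bigcap_i C_i$ with $\bigcap_{i \leq i'} C_i \supseteq \bigcap_{i \leq i'} \bigcup_{j \leq f(i)} B_{i,j}$. With that identification in hand, $\ax{B\Sigma_1}$ reduces the claim $\mu(C) \geq r - \delta$ to the outer estimate already established, and the proof concludes as in the Cantor space case.
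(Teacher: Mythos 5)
The proposal correctly identifies where the Cantor-space argument breaks down --- a finite union of basic open balls in a general compact metric space is open but not closed, so $C=\bigcap_i\bigcup_{j\le f(i)}B_{i,j}$ is merely a $G_\delta$ set rather than a closed set --- but the proposed ``fix'' does not actually repair this. You say the complement of $\bigcup_{j\le f(i)}B_{i,j}$ ``can be presented, uniformly and computably from $X'$, as an open set.'' That complement is a \emph{closed} set, and it is not open in general, so it cannot be enumerated as a union of basic open balls; what you would need is to present $\bigcup_{j\le f(i)}B_{i,j}$ itself as a $\Pi^{0,X'}_1$ set, which is impossible extensionally since it is a proper open set. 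This is precisely the obstruction the paper's proof is designed to circumvent, and no amount of coding or $\ax{B\Sigma_2}$ bookkeeping removes it. A secondary problem is your claim that the selection of $f(i)$ ``goes through verbatim'': in the general setting the measure of an open set is defined as a supremum of integrals of test functions vanishing outside it, so the quantity $\mu(\bigcup_{j\le J}B_{i,j}\setminus\bigcup_{j\le K}B_{i,j})$ --- the measure of a difference of open sets, which is neither open nor closed --- is not available, whereas in $2^\omega$ it was the measure of a clopen set.

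The paper's actual proof replaces the inner finite unions by genuinely closed sets built from test functions, following Yu. To each open $G_i$ one associates an increasing sequence of test functions $(g_{i,k})_k$ vanishing outside $G_i$ with $\int g_{i,k}$ approximating $\mu(G_i)$ from below; the function $f(i)$ is chosen (computably in $X'$) so that $\int g_{i,j}-\int g_{i,f(i)}<\delta/2^{i+2}$ for $j\ge f(i)$ --- note this is a difference of integrals, not of measures of set differences. One then takes $C_{i'}=\{x \st \min_{i\le i'}g_{i,f(i)}(x)\ge\delta/4\}$, a superlevel set of a continuous function and hence a bona fide closed set contained in $\bigcap_{i\le i'}G_i$, and $C=\bigcap_{i'}C_{i'}$. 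The measure estimate uses the Markov-type inequality $\mu(C_{i'})\ge\int\min_{i\le i'}g_{i,f(i)}-\delta/4$, and the error budget $r-\delta/4-\delta/2-\delta/4=r-\delta$ accounts for the extra loss this incurs. Your outer estimate and the role of $\ax{B\Sigma_2}$ (to handle the bounded minimization over $i\le i'$) are in the right spirit, but without the test-function device the construction of a closed $C$ is missing.
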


\begin{proof}
The proof is similar to that of Proposition~\ref{rand:prop:two}, but slightly complicated by the fact that now basic open sets are no longer clopen. The construction we describe below is essentially that used by Yu~\cite[Lemma 4.1]{yu:93} to show that $\na{ACA_0}$ proves the regularity of measures for $G_\delta$ sets; we only need to confirm that the construction is computable in the Turing jump of the original set, and that the correctness can be verified in $\na{RCA_0 + \ax{B\Sigma_2}}$.

Suppose $A = \bigcap_i G_i$ where $G_i$ is a decreasing sequence of open sets and for each $i$, $\mu(G_i) \geq r$. Yu \cite[Lemma 2.3]{yu:93} notes that to each $G_i$ we can associate an increasing sequence of test functions $(g_{i,k})_{k \in \omega}$ that all vanish outside of $G_i$, with the property that whenever $\mu(G_i) > s$ then $\int g_{i,k} > s$ for sufficiently large $k$. 

As in the proof of Proposition~\ref{rand:prop:two}, we can define a function $f(i)$ computable in $X'$ such that for every $j \geq f(i)$, $\int g_{i,j} - \int g_{i,f(i)} < \delta / 2^{i+2}$. For each $i'$, let 
\[
 C_{i'} = \{ x \st \min_{i \leq i'} g_{i,f(i)}(x) \geq \delta / 4 \}.
\]
Using $\ax{B\Sigma_2}$, we can express this as a closed set computable in $X'$. (The collection axiom is needed to transport the bounded quantifier corresponding to the bounded minimization.) Let $C = \bigcap_{i'} C_{i'}$. If $x \in C_{i'}$ then $x \in \bigcap_{i \leq i'} G_i$, so $C \subseteq A$. Hence it suffices to show that for every $i'$, $\mu(C_{i'}) \geq r - \delta$.

For each $i'$, we have $\mu(C_{i'}) \geq \int \min_{i \leq i'} g_{i,f(i)} - \delta / 4$. As in the proof of Proposition~\ref{rand:prop:two} we can find a $J$ large enough so that $\int \min_{i \leq i'} g_{i,J} > r - \delta / 4$. But we also have
\[
\int \min_{i \leq i'} g_{i,J} -\int \min_{i \leq i'} g_{i,f(i)} \leq \sum_{i \leq i'} \left(\int g_{i,J} -\int g_{i,f(i)}\right) \leq \sum_{i \leq i'} \delta / 2^{i+1} < \delta / 2,
\]
so $\mu(C_{i'}) \geq r - \delta / 4 - \delta / 2 - \delta / 4 = r - \delta$, as required.
\end{proof}

\begin{proposition}
 \label{general:eq}
 Over $\ax{RCA_0}$, $\ax{G_\delta\dash POS}$ is equivalent to $\ax{2\dash WWKL}$.
\end{proposition}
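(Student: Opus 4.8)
The plan is to reduce this to the already-established equivalence of $\ax{2\dash WWKL}$ and $\ax{2\dash POS}$ (Proposition~\ref{two:wwkl:pos:equiv}), of which $\ax{G_\delta\dash POS}$ is the generalization from Cantor space to arbitrary compact separable metric spaces. The forward direction is immediate: Cantor space $2^\omega$ with the coin-flipping measure is a compact separable metric space, and with respect to it $\ax{G_\delta\dash POS}$ specializes to exactly $\ax{2\dash POS}$; hence $\ax{G_\delta\dash POS}$ implies $\ax{2\dash POS}$, which is equivalent to $\ax{2\dash WWKL}$ by Proposition~\ref{two:wwkl:pos:equiv}.

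For the reverse direction, I would argue in $\na{RCA_0 + \ax{2\dash WWKL}}$. By Proposition~\ref{rand:prop:zero}, $\ax{2\dash WWKL}$ yields $\ax{B\Sigma_2}$, so we may freely invoke the preceding propositions whose proofs require it. Fix a measure $\mu$ on a compact separable metric space $(X,d)$ and a $\Pi^{0,X}_2$ set $A$ with $\mu(A) \geq r$ for some $r > 0$. Applying Proposition~\ref{general:pos:prop} with $\delta = r/2$, we obtain a $\Pi^{0,X'}_1$ (closed) set $C \subseteq A$ with $\mu(C) \geq r/2 > 0$. It therefore suffices to produce an element of $C$.

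The remaining step is to see that $\ax{2\dash WWKL}$ yields an element of any $\Pi^{0,X'}_1$ closed set of positive measure in a compact separable metric space. This is exactly the relativization to $X'$ of the general form of the implication $\ax{WWKL} \limplies \ax{1\dash POS}$ established by Yu and Simpson \cite{yu:simpson:90} (the ``wider class of measure spaces'' alluded to in the proof of Theorem~\ref{eq:thm:one}). Concretely, their construction associates to a closed set of positive measure---computable here from $X'$---an infinite binary tree, computable from $X'$, whose levels have density bounded below by the measure and through which every path names a point of the closed set. Since $C$ is $\Pi^{0,X'}_1$ with $\mu(C) \geq r/2$, this produces a tree satisfying the hypothesis of $\ax{2\dash WWKL}$; applying $\ax{2\dash WWKL}$ gives a path, hence an element of $C \subseteq A$, as required.

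The main obstacle is this last step: one must check that the Yu--Simpson reduction of a closed set of positive measure to a binary tree of positive density relativizes to $X'$ with the tree remaining computable from $X'$ (and not merely from a higher jump), and that the passage from positive measure to positive density of the tree is verifiable in $\na{RCA_0 + \ax{B\Sigma_2}}$ rather than requiring full arithmetic comprehension. Since $\ax{B\Sigma_2}$ is available, the delicate reasoning about $X'$-computable functions---course-of-values and finite maxima---needed to carry out and verify the construction in the weak theory is legitimate, and with that in hand the reduction proceeds just as in the unrelativized, $\na{RCA_0}$-provable case.
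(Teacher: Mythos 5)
Your proposal is correct and follows essentially the same route as the paper: reduce the forward direction to the Cantor-space instance $\ax{2\dash POS}$, and for the converse combine Proposition~\ref{general:pos:prop} (after securing $\ax{B\Sigma_2}$ via Proposition~\ref{rand:prop:zero}) with the relativization to $X'$ of Yu and Simpson's Theorem~2, which extracts an element of a positive-measure $\Pi^{0,X'}_1$ set from $\ax{2\dash WWKL}$. Your added remarks on verifying that the Yu--Simpson tree construction relativizes with the tree still computable from $X'$ simply make explicit what the paper leaves as a citation.
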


\begin{proof}
 Given the usual representation of Cantor space as a metric space \cite{simpson:99}, $\ax{2\dash POS}$ is an instance of $\ax{G_\delta\dash POS}$. Thus by Theorem~\ref{main:rand:theorem} $\ax{G_\delta\dash POS}$ implies $\ax{2\dash WWKL}$, and we only have to prove the converse. Fortunately, Yu and Simpson \cite{yu:simpson:90} have done most of the hard work: Theorem 2 of \cite{yu:simpson:90} relativizes to show $\ax{2\dash WWKL}$ proves that every $\Pi^{0,X'}_1$ set with positive measure contains an element. By the previous proposition, every $\Pi^{0,X}_2$ set with positive measure contains a $\Pi^{0,X'}_1$ set with positive measure, so we are done.
\end{proof}

When $(f_n)$ is a sequence of functions, we can express the fact that $(f_n)$ converges to $f$ pointwise almost everywhere by saying that there is a null $G_\delta$ set $C$ such that for each point $x \not\in C$, each $f_n(x)$ is defined (see Section~\ref{formalization:section}) and $(f_n(x))$ converges to $f(x)$. Recall that in ordinary mathematics, a sequence $(f_n)$ converges to $f$ \emph{almost uniformly} if for every $\lambda > 0$ and $\varepsilon > 0$ there is an $n$ such that $\mu(\{ x \st \ex {m \geq n} |f_m(x) - f(x)| > \varepsilon\}) < \lambda$. In the case where each $f_n$ is a test function and $f(x) = 0$, the set in question is an open set, so when $(f_n)$ is a sequence of test functions we can straightforwardly express the fact that $(f_n)$ approaches $0$ almost uniformly in the language of second-order arithmetic. 

\begin{theorem}
 \label{main:thm}
Over $\na{RCA_0}$, the following are equivalent:
\begin{enumerate}
 \item $\ax{2\dash WWKL}$
 \item ``If $(f_n)$ is a sequence of test functions that converges to $0$ pointwise~a.e., then $(f_n)$ converges to $0$ almost uniformly.''
 \item $\ax{DCT'}$, that is, ``If $f$ and $g$ are elements of $L^1(\mdl X)$, $(f_n)$ is a sequence of elements of $L^1(\mdl X)$ dominated by $g$, and $(f_n)$ converges pointwise a.e.~to $f$, then $(\int f_n)$ converges to $\int f$.''
 \item ``If $(f_n)$ is a sequence of nonnegative test functions that is dominated by $1$ and converges to $0$ everywhere, then $(\int f_n)$ converges to $0$.''
\end{enumerate}
\end{theorem}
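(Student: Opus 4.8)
The plan is to prove the cycle of implications $(1) \limplies (2) \limplies (3) \limplies (4) \limplies (1)$, reasoning throughout in $\na{RCA_0}$ and drawing freely on Theorem~\ref{main:rand:theorem}, Proposition~\ref{general:eq}, and the measure-theoretic apparatus of Section~\ref{formalization:section}. The implication $(3) \limplies (4)$ is immediate, since $(4)$ is simply the instance of $\ax{DCT'}$ in which $f = 0$ and $g = 1$: a nonnegative sequence of test functions bounded by $1$ and converging everywhere to $0$ is dominated by $1 \in L^1(\mdl X)$ and converges pointwise a.e.~to $0$, so $\ax{DCT'}$ yields $\int f_n \to \int 0 = 0$.

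For $(4) \limplies (1)$ I would show that $(4)$ implies $\ax{2\dash POS}$, which is equivalent to $\ax{2\dash WWKL}$ by Theorem~\ref{main:rand:theorem}. Fix a $\Pi^{0,X}_2$ set $A$ with $\mu(A) > \delta > 0$, and suppose toward a contradiction that $A$ has no element. The relativization of Theorem~\ref{two:pos:thm} furnishes simple characteristic functions $f_i$ with $\int f_i > \delta$ for every $i$ such that $(f_i(x))$ converges to $0$ for all $x \notin A$; since $A$ is empty as a set of points, the convergence holds everywhere. Each $f_i$ is a nonnegative test function bounded by $1$, so $(4)$ forces $\int f_i \to 0$, contradicting $\int f_i > \delta$. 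Hence every positive-measure $\Pi^{0,X}_2$ set has an element, i.e.~$\ax{2\dash POS}$ holds. This is exactly why $(4)$ is phrased with everywhere, rather than a.e., convergence.

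The implication $(2) \limplies (3)$ is the usual passage from almost uniform convergence to dominated convergence, specialized to test functions. Given $f$, $g$, and $(f_n)$ as in $\ax{DCT'}$, I would set $h_n = |f_n - f|$, observe that domination gives $h_n \leq 2g$ a.e.~and $h_n \to 0$ a.e., and replace each $h_n$ by a test function $h_n'$ with the same limiting behavior off a null set, using the reduction from the proof of Theorem~\ref{effective:convergence:thm} (valid in $\na{RCA_0}$). Statement $(2)$ then gives almost uniform convergence of $(h_n')$ to $0$. To obtain $\int h_n' \to 0$, and hence $\int f_n \to \int f$, I would split each integral over the small open ``bad'' set where $h_n'$ exceeds a threshold and over its complement: the good part contributes at most the threshold, while the bad part is controlled by the domination $h_n' \leq 2g$ together with absolute continuity of the integral of $g$. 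The latter is available in $\na{RCA_0}$ via the estimate $\int_E g \leq K \mu(E) + \int (g - \min(g,K))$ and the integrability of $g$.

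The remaining implication $(1) \limplies (2)$ is of Egorov type and is where I expect the main obstacle. Suppose $(f_n)$ is a sequence of test functions converging to $0$ pointwise a.e.~but not almost uniformly; then for some $\lambda, \varepsilon > 0$ the decreasing open sets $U_n = \{ x \st \ex {m \geq n} |f_m(x)| > \varepsilon \}$ all satisfy $\mu(U_n) \geq \lambda$, so $W = \bigcap_n U_n$ is a strict $\Pi^{0,X}_2$ set of positive measure. On the other hand, pointwise a.e.~convergence provides a null $G_\delta$ set $C = \bigcap_k V_k$ with $\mu(V_k) < 2^{-k}$ and $W \subseteq C$. The difficulty is that ``$W$ has positive measure yet sits inside a null set'' is not on its face contradictory in the weak base theory, since the measure of a $\Pi^0_2$ set is only intensional and full compactness ($\ax{WKL}$) is unavailable. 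I would resolve this by applying the inner regularity of Proposition~\ref{general:pos:prop} --- legitimate because $\ax{2\dash WWKL}$ yields $\ax{B\Sigma_2}$ by Proposition~\ref{rand:prop:zero} --- to extract a $\Pi^{0,X'}_1$ set $D \subseteq W$ with $\mu(D) \geq \lambda/2$, and then invoking monotonicity of measure for a closed set inside an open set, a consequence of $\ax{WWKL}$ (which holds by Theorem~\ref{hier:thm}), to conclude $\mu(D) \leq \mu(V_k) < 2^{-k}$ for $k$ large, contradicting $\mu(D) \geq \lambda/2$. Getting this inner-regularity extraction to interact correctly with the null cover $(V_k)$ entirely within $\na{RCA_0 + \ax{2\dash WWKL}}$ is the crux of the argument.
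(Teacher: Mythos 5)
Your cycle $(1)\Rightarrow(2)\Rightarrow(3)\Rightarrow(4)\Rightarrow(1)$ is the paper's, and three of the four implications are argued essentially as in the paper: $(3)\Rightarrow(4)$ is immediate; $(4)\Rightarrow(1)$ uses the test functions of Theorem~\ref{two:pos:thm} exactly as the paper does; and your $(2)\Rightarrow(3)$ is the paper's argument with its two-stage reduction (first constant $g$, then general $g$ via the truncation Lemma~\ref{dct:lemma}) collapsed into the single estimate $\int_E g \leq K\mu(E) + \int(g-\min(g,K))$ --- the same truncation trick and the same reduction to test functions. The genuine divergence is in $(1)\Rightarrow(2)$, where the paper's move is markedly simpler than yours: since a.e.\ convergence is witnessed by a null $G_\delta$ set $\bigcap_k V_k$, one picks a single open set $B=V_k$ with $\mu(B)<\lambda/2$, observes that $\bigcap_n U_n$ minus $B$ is again a $\Pi^{0,(f_n)}_2$ set of positive measure, applies $\ax{2\dash POS}$ to extract a point $x$, and reaches a pointwise contradiction ($x\notin B$ forces $f_n(x)\to 0$ while $x\in\bigcap_n U_n$ forbids it). This sidesteps entirely the intensionality worry you identify as the crux, because no comparison of measures of differently presented sets is ever made. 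Your route --- inner regularity via Proposition~\ref{general:pos:prop} to a $\Pi^{0,X'}_1$ set $D\subseteq W$ followed by the comparison $\mu(D)\leq\mu(V_k)$ --- can be pushed through, but it amounts to unrolling the proof that $\ax{2\dash WWKL}$ implies $\ax{2\dash POS}$, and the comparison step is not a consequence of plain $\ax{WWKL}$ as you assert: $D$ is only computable from the jump, so you need the Yu--Simpson covering argument relativized to $X'$ (the content of $\ax{2\dash WWKL}$ invoked in Proposition~\ref{general:eq}), and even then the contradiction is most cleanly obtained by applying the relativized positivity principle to the closed set $D$ intersected with the complement of $V_k$, producing a point that is both inside and outside $V_k$. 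In short: correct in outline, but the obstacle you flag dissolves if you intersect with the complement of one $V_k$ first and let $\ax{2\dash POS}$ hand you the offending point directly.
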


Notice that statement (2) is Egorov's theorem restricted to test functions. Statement (4) is the dominated convergence theorem with additional restrictions: the functions in the sequence are test functions, they are nonnegative and uniformly dominated by 1, and they converge everywhere (rather than just a.e.).

To prove that (1) implies (2), suppose $(f_n)$ is a sequence of test functions that converges to $0$ pointwise~a.e., but does not converge to $0$ almost uniformly. Then for some $\varepsilon > 0$ and $\lambda > 0$ and any $n$, if we set $A_n = \{ x \st \ex {m \geq n} |f_m(x)| > \varepsilon\}$, we have $\mu(A_n) > \lambda$. Since we are assuming $f_n$ converges a.e., there is an open set $B$ such that $\mu(B) < \lambda / 2$ and $(f_n)$ converges to $0$ off of $B$. Then $\bigcap_n A_n \setminus B$ is a $\Pi^{0,(f_n)}_2$ set with measure greater than $\lambda$, and so, by $\ax{2\dash POS}$, has an element $x$. But then $x \not\in B$ implies that $(f_n(x))$ converges to $0$ and $x \in \bigcap_n A_n$ implies that for every $n$ and there in $m \geq n$ such that $|f_m(x)| > \varepsilon$, a contradiction.

To prove that (2) implies (3), without loss of generality we will assume that $f = 0$ in the statement of $\ax{DCT'}$ and each $f_n$ and $g$ are nonnegative. First let us prove that $\int f_n$ converges to $0$ in the special case where $g$ is the constant function $1$. As in the proof of Theorem~\ref{effective:convergence:thm}, we can assume that each $f_n$ is a test function. By (2), we have that $(f_n)$ converges to $0$ almost uniformly, so, in particular, for any $\varepsilon > 0$ there is an $n$ such that for every $m \geq n$, $\mu(\{ x \st |f_m(x)| > \varepsilon / 2 \} ) < \varepsilon / 2$. Write
\[
\int f_m = \int \min(f_m,\varepsilon/2) + \int (\max(f_m,\varepsilon / 2) - \varepsilon / 2).
\] 
The first term is less than or equal to $\varepsilon / 2$. The function $\max(f_m,\varepsilon / 2) - \varepsilon / 2 = \max(f_m - \varepsilon / 2, 0)$ is bounded by 1 and vanishes outside $\{ x \st |f_m(x)| > \varepsilon / 2 \}$, so its integral is less than $\varepsilon / 2$. Hence $\int f_m < \varepsilon$, as required.

Scaling, we have that (2) implies (3) in the special case where $g$ is any constant function. To handle the more general case, we need a lemma. For any constant $K$ and nonnegative $f \in L^1(2^\omega)$, write $f^K$ for $\min(f,K)$. The next lemma shows that as $K$ approaches infinity, $f^K$ approaches $f$ in the $L_1$ norm, provably in $\na{RCA_0}$.

\begin{lemma}
\label{dct:lemma}
$\na{RCA_0}$ proves that if $g$ is any nonnegative element of $L^1(2^\omega)$ and $\varepsilon > 0$, there is an integer $K$ such that $\int (g - g^K) < \varepsilon$.
\end{lemma}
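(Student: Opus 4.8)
The plan is to argue entirely at the level of names, so that every integral we manipulate is the integral of a \emph{simple} function, hence a concrete finite sum to which only the elementary properties of the integral available in $\na{RCA_0}$ are applied. Fix a name $(g_j)$ for $g$, so that $\|g_j - g_{j'}\|_1 < 2^{-j}$ whenever $j' \geq j$, and recall that $\int g = \lim_j \int g_j$. Since $g^K = \min(g,K)$ has name $(\min(g_j,K))$, the difference $g - g^K$ has name $(\max(g_j - K, 0))_j$, and so $\int (g - g^K) = \lim_j \int \max(g_j - K, 0)$. Thus it suffices to bound the simple integrals $\int \max(g_j - K, 0)$ uniformly for large $j$.

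First I would fix an index $i$ with $2^{-i} < \varepsilon$. Because $g_i$ is a simple function it takes only finitely many rational values, so I can compute an integer $K$ with $g_i(x) \leq K$ for all $x$; for this $K$ we have $\max(g_i - K, 0) = 0$ identically. The key pointwise estimate is that $t \mapsto \max(t - K, 0)$ is nondecreasing and $1$-Lipschitz, which gives, for each $j$ and each $x$,
\[
 \max(g_j(x) - K, 0) \leq \max(g_i(x) - K, 0) + |g_j(x) - g_i(x)| = |g_j(x) - g_i(x)|.
\]
Integrating this inequality between simple functions (monotonicity and linearity of the integral, both elementary in $\na{RCA_0}$) yields $\int \max(g_j - K, 0) \leq \|g_j - g_i\|_1$ for every $j$. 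Taking $j \geq i$ and using the Cauchy rate of the name gives $\int \max(g_j - K, 0) < 2^{-i}$, and passing to the limit in $j$ we conclude $\int (g - g^K) \leq 2^{-i} < \varepsilon$, as required. (Note that nonnegativity of $g$ is not actually needed; only an upper bound on the approximant $g_i$ is used.)

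I do not expect a genuine obstacle here: the content is a standard approximation argument, and the only thing that requires care is the bookkeeping needed to keep the reasoning inside $\na{RCA_0}$. Concretely, I must make sure to (i) phrase the whole argument in terms of the name functions, so that every integral is a finite sum and the pointwise Lipschitz inequality can be integrated by plain monotonicity rather than any limit theorem; (ii) invoke only the facts about $\min$, $\max$, and $\int$ on $L^1(2^\omega)$ that the preceding discussion has already established in $\na{RCA_0}$, in particular that $(\min(g_j,K))$ names $g^K$ and that $\int$ of an element is the limit of the integrals of its name; and (iii) use the explicit Cauchy rate $2^{-j}$ of the name in place of any appeal to the existence of $\int g$ as a completed object. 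With these conventions the displayed estimates are all between concrete simple functions, and the verification goes through without further hypotheses.
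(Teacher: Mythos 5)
Your proposal is correct and follows essentially the same route as the paper's proof: approximate $g$ by a simple function $g_i$ within $\varepsilon$ in the $L^1$ norm, take $K$ to bound $g_i$, and observe that $\max(g_j - K, 0) \leq \max(g_j - g_i, 0) \leq |g_j - g_i|$ pointwise, so $\int (g - g^K) \leq \|g - g_i\|_1 < \varepsilon$. Your version just makes the name-level bookkeeping explicit where the paper relegates it to a parenthetical remark.
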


\begin{proof}
 Given $g$, choose a test function $g_n$ such that $\| g - g_n \| < \varepsilon$. Choose $K$ big enough so that $g_n$ is bounded by $K$. Then $\|g - g^K\| \leq \| g - g_n \| < \varepsilon$.

(Intuitively,  this last formula holds because $g(x) - g^K(x) = 0$ when $g(x) \leq K$, and $g(x) - g^K(x) < g(x) - g_n(x)$ when $g(x) > K$. Formally, one can show $\| g - g^K \| = \| \max(g,K) - K \| = \| \max(g - K, 0) \| \leq \| \max(g - g_n,0) \| \leq \| g - g_n \|$.)
\end{proof}

To complete the proof that (2) implies (3), now suppose $(f_n)$ approaches 0 pointwise~a.e.\ and is dominated by $g$. We need to show that for every $\varepsilon > 0$, there is an $m$ such that $\int f_n < \varepsilon$ for all $n \geq m$. Choose $K$ as in Lemma~\ref{dct:lemma} with $\varepsilon / 2$ in place of $\varepsilon$. Then $(f_n^K)$ still converges to 0 pointwise~a.e., so by the version of the dominated convergence theorem we have already proved, there is an $m$ such that for every $n \geq m$, $\int f_n^K < \varepsilon / 2$. But for every $n$, $\int (f_n - f_n^K) < \int (g - g_n^K) < \varepsilon / 2$, so for every $n \geq m$ we have $\int f_n < \varepsilon$, as required.

Clearly (3) implies (4). To show that (4) implies (1), consider any $\Pi^0_2$ set $A = \bigcap_n G_n$ with measure greater than or equal to $\delta > 0$. As in the proof of Theorem~\ref{two:pos:thm}, for each $n$ we can find a test function $f_n$ with the property $\int f_n > \delta$ but $f_n$ vanishes outside of $G_n$. In particular, $(f_n)$ converges to $0$ outside of $A$. By (4), there is an $x$ such that $(f_n(x))$ does not converge to $0$. This element, $x$, must be in $A$.

This completes the proof of Theorem~\ref{main:thm}. We also have an analogous version for the principle $\ax{DCT^*}$ described in the introduction. Say that a sequence $(f_n)$ is ``almost uniformly Cauchy'' if for every $\lambda > 0$ and $\varepsilon > 0$ there is an $n$ such that $\mu(\{x \st \ex {m, m' \geq n} |f_m(x) - f_{m'}(x)| > \varepsilon\}) < \lambda$. In ordinary mathematics, this is clearly equivalent to being almost uniformly convergent, but it has the advantage here that it does not require any mention of limits. 

\begin{theorem}
Over $\na{RCA_0}$, the following are equivalent:
\begin{enumerate}
\item $\ax{2\dash WWKL}$
\item ``If $(f_n)$ is a sequence of test functions such that $f_n(x)$ is Cauchy for almost every $x$, then $(f_n)$ is almost uniformly Cauchy.''
\item $\ax{DCT^*}$, this is, ``If $g$ is an element of $L^1(\mdl X)$, $(f_n)$ is a sequence of elements of $L^1(\mdl X)$ dominated by $g$, and the sequence $(f_n(x))$ is Cauchy for almost every $x$, then $(\int f_n)$ is Cauchy.''
\item ``If $(f_n)$ is a sequence of nonnegative test functions that is dominated by $1$ and the sequence $(f_n(x))$ is Cauchy for every $x$, then $(\int f_n)$ is Cauchy.''
\end{enumerate}
\end{theorem}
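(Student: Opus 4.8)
The plan is to establish the cycle $(1) \Rightarrow (2) \Rightarrow (3) \Rightarrow (4) \Rightarrow (1)$, following the proof of Theorem~\ref{main:thm} almost verbatim but with the phrase ``is Cauchy'' playing the role of ``converges to $0$'' throughout. Three of the four implications transfer with essentially no change, so I would concentrate the real work on the one step where the Cauchy formulation forces a genuinely new device. I will use freely that, by Proposition~\ref{general:eq} and Theorem~\ref{main:rand:theorem}, statement (1) is interchangeable with $\ax{G_\delta\dash POS}$: every $\Pi^{0,X}_2$ set of positive measure has an element.

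For $(1) \Rightarrow (2)$ I would argue contrapositively, exactly as in Theorem~\ref{main:thm}. Suppose $(f_n)$ is a sequence of test functions whose values are Cauchy a.e.\ but which is not almost uniformly Cauchy. Then there are $\varepsilon, \lambda > 0$ such that, setting $A_n = \{ x \st \ex{m,m' \geq n} |f_m(x) - f_{m'}(x)| > \varepsilon \}$, we have $\mu(A_n) > \lambda$ for every $n$; the $A_n$ are open and decreasing, so $\bigcap_n A_n$ is a strict $\Pi^{0,(f_n)}_2$ set of measure at least $\lambda$. Choosing an open $B$ of measure $< \lambda/2$ off which the values are Cauchy, the set $\bigcap_n A_n \setminus B$ is a $\Pi^{0,(f_n)}_2$ set of positive measure, so by (1) it has an element $x$. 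But then $x \notin B$ forces $(f_n(x))$ to be Cauchy while $x \in \bigcap_n A_n$ forces it not to be, a contradiction.

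For $(2) \Rightarrow (3)$ I would follow the three-stage structure of Theorem~\ref{main:thm}. First reduce to nonnegative $f_n$ by passing to $(f_n + g)$, which is nonnegative, dominated by $2g$, Cauchy at exactly the same points, and has a Cauchy sequence of integrals if and only if the original does. Next, using the reduction to test functions from Theorem~\ref{effective:convergence:thm}, handle the case $g = 1$: given $\varepsilon$, almost uniform Cauchyness from (2) yields an $n$ and a bad set of measure $< \varepsilon/2$ off which $|f_m - f_{m'}| \leq \varepsilon/2$ for all $m,m' \geq n$, and splitting $\int |f_m - f_{m'}|$ over the bad set (where the integrand is $\leq 1$) and its complement bounds $|\int f_m - \int f_{m'}|$ by $\varepsilon$; scaling covers an arbitrary constant $g$. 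Finally, for general $g$, choose $K$ by Lemma~\ref{dct:lemma} so that $\int(g - g^K) < \varepsilon/3$; then $(f_n^K)$ is dominated by the constant $K$ and its values stay Cauchy, so the constant case makes $(\int f_n^K)$ Cauchy, while $\int(f_n - f_n^K) \leq \int(g - g^K) < \varepsilon/3$ uniformly in $n$, whence $(\int f_n)$ is Cauchy as well. The implication $(3) \Rightarrow (4)$ is then immediate, since (4) is simply the instance of $\ax{DCT^*}$ in which the $f_n$ are nonnegative test functions dominated by the constant $1 \in L^1(\mdl X)$ and are Cauchy everywhere.

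The interesting step is $(4) \Rightarrow (1)$, and this is where I expect the main obstacle. Following Theorem~\ref{two:pos:thm}, given a $\Pi^{0,X}_2$ set $A = \bigcap_n G_n$ of measure $\geq \delta > 0$ one produces test functions $f_n$, bounded by $1$ and vanishing outside $G_n$, with $\int f_n > \delta$, so that $f_n(x) \to 0$ (and in particular is Cauchy) for every $x \notin A$. Unlike in Theorem~\ref{main:thm}, I cannot simply observe that $(\int f_n)$ fails to converge to $0$, since here I need it to fail to be \emph{Cauchy}, and the sequence $(\int f_n)$ might well converge. The fix is to interleave with the zero function: set $h_{2n} = f_n$ and $h_{2n+1} = 0$. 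Then $(\int h_n)$ oscillates between values $> \delta$ and $0$, so it is not Cauchy, while for every $x \notin A$ the interleaved sequence still has $h_n(x) \to 0$ and hence $(h_n(x))$ Cauchy. By the contrapositive of (4), some $x$ makes $(h_n(x))$ non-Cauchy, and that $x$ must lie in $A$; thus $A$ is nonempty, which gives $\ax{G_\delta\dash POS}$ and hence (1). The main obstacle is thus precisely this need to engineer a failure of the Cauchy conclusion rather than merely of convergence to $0$; interleaving resolves it cleanly, and since it preserves the property of being a sequence of nonnegative test functions dominated by $1$ and keeps the construction computable, the entire argument goes through in $\na{RCA_0}$ just as the proof of Theorem~\ref{main:thm} does.
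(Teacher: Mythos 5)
Your proposal is correct and follows essentially the same route as the paper: the same cycle $(1)\Rightarrow(2)\Rightarrow(3)\Rightarrow(4)\Rightarrow(1)$ obtained by modifying the proof of Theorem~\ref{main:thm}, with the key new device in $(4)\Rightarrow(1)$ being an interleaving that forces $(\int f'_n)$ to fail to be Cauchy. The only (immaterial) difference is that you interleave with the zero function where the paper sets $f'_{2n+1} = f_n/2$; both choices keep the sequence of test functions nonnegative, dominated by $1$, and pointwise Cauchy off $A$ while making consecutive integrals differ by at least $\delta/2$.
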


\begin{proof}
We need only slight modifications to the proof of Theorem~\ref{main:thm}. To show that (1) implies (2), replace the sets $A_n$ in the previous proof by $\{x \st \ex {m, m' \geq n} |f_m(x) - f_m'(x)| > \varepsilon\}$. To show that (2) implies (3) in the special case where $g = 1$, note that since $(f_n)$ is almost uniformly Cauchy, for any $\varepsilon > 0$ there is an $n$ such that for every $m, m'$ greater than or equal to $n$, $\mu(\{x \st |f_m(x) - f_{m'}(x)| > \varepsilon / 2\}) < \varepsilon / 2$. Then argue as before that for such $m$ and $m'$, 
\[
  \left|\int f_m - \int f_{m'} \right| \leq \int |f_m(x) - f_{m'}(x)| < \varepsilon.
\]
The generalization to arbitrary $g$ is as before. That (3) implies (4) is immediate. To show that (4) implies (1), let $(f_n)$ be as in the previous proof, and define the sequence $(f'_n)$ by $f'_{2n} = f_n$, $f'_{2n+1} = f_n / 2$. Then $(\int f'_n)$ is not Cauchy, since consecutive elements differ by at least $\lambda / 2$. By (4), there is a point $x$ such that $(f'_n(x))$ is not Cauchy; but since $(f_n)$ converges to $0$ outside of $A$, this point has to be in $A$.
\end{proof}

\begin{corollary}
 Over $\na{RCA_0}$, $\ax{DCT'}$ and $\ax{DCT^*}$ are equivalent to each other, and are strictly stronger than $\ax{WWKL}$, strictly weaker than $\ax{ACA}$, and not comparable with $\ax{WKL}$.
\end{corollary}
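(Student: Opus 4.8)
The plan is to read off every clause of the corollary from equivalences already in hand, using $\ax{2\dash WWKL}$ as a central hub. By Theorem~\ref{main:thm}, $\ax{DCT'}$ is equivalent to $\ax{2\dash WWKL}$ over $\na{RCA_0}$, and the theorem immediately preceding this corollary (its $\ax{DCT^*}$-analogue) gives the same equivalence for $\ax{DCT^*}$. Composing these two equivalences yields at once that $\ax{DCT'}$ and $\ax{DCT^*}$ are equivalent to each other. It then suffices to record the remaining comparisons for $\ax{2\dash WWKL}$ and transport them back along this equivalence.

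For the relation to $\ax{WWKL}$ and $\ax{WKL}$, I would simply invoke Theorem~\ref{hier:thm}. It asserts that $\ax{2\dash WWKL}$ implies $\ax{WWKL}$ and that this implication is strict, so $\ax{DCT'}$ and $\ax{DCT^*}$ are strictly stronger than $\ax{WWKL}$. It also asserts both non-implications between $\ax{2\dash WWKL}$ and $\ax{WKL}$, which, transported through the equivalence above, give the incomparability of $\ax{DCT'}$ and $\ax{DCT^*}$ with $\ax{WKL}$.

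The only clause needing a separate argument is strict weakness relative to $\ax{ACA}$, and even this reduces to facts already available. For the implication $\ax{ACA} \limplies \ax{DCT'}$, I would argue in $\na{ACA_0}$ that for any $X$ the Turing jump $X'$ exists as a set and that $\ax{WKL}$ holds, so every infinite $X'$-computable binary tree --- in particular every such tree of positive measure --- has a path; this establishes $\ax{2\dash WWKL}$, and Theorem~\ref{main:thm} then delivers $\ax{DCT'}$. For strictness, observe that $\ax{ACA}$ implies $\ax{WKL}$ whereas, by Theorem~\ref{hier:thm}, $\ax{2\dash WWKL}$ (equivalently $\ax{DCT'}$) does not; hence $\ax{DCT'}$ cannot imply $\ax{ACA}$. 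I expect no genuine obstacle, since every ingredient is a labelled theorem; the one point deserving care is the direction $\ax{ACA} \limplies \ax{2\dash WWKL}$, where one must confirm that passing to $X'$ and applying $\ax{WKL}$ is legitimate in $\na{ACA_0}$ and that the positive-measure hypothesis of $\ax{2\dash WWKL}$ merely narrows the class of trees to which $\ax{WKL}$ is applied. Pleasantly, routing the $\ax{ACA}$ separation through incomparability with $\ax{WKL}$ avoids any appeal to the deeper $\Pi^1_1$-conservation results of Slaman and Conidis.
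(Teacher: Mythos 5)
Your proposal is correct and follows essentially the same route as the paper, which states this corollary without proof as an immediate consequence of Theorem~\ref{main:thm}, its $\ax{DCT^*}$ analogue, and Theorem~\ref{hier:thm}. Your only added step --- deriving $\ax{ACA} \limplies \ax{2\dash WWKL}$ by forming $X'$ as a set and applying $\ax{WKL}$, and obtaining strictness from the failure of $\ax{2\dash WWKL} \limplies \ax{WKL}$ --- is exactly the intended filling-in and is sound.
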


\section{Formalizing $n$-randomness}
\label{n:randomness:section}

In this section we observe that notions related to 2-randomness treated in Section~\ref{formalization:section} can be generalized to the corresponding notions for $n$-randomness, yielding a hierarchy of theories below $\ax{ACA_0}$. This involves adapting the proofs in Section~\ref{formalization:section} to formalize a number of basic properties of $n$-randomness (see \cite[Section 6.10]{downey:hirschfeldt:10}). As was the case for $n=2$ in Section~\ref{formalization:section}, $\ax{B\Sigma_n}$ is needed to show that $\Sigma^{0,X}_n$ sets are closed under bounded intersection, and also to develop a reasonable theory of computability relative to the $n$th Turing jump $X^{(n)}$ of a set $X$. Here we only sketch the details in the hopes that they will prove useful.

For each $n$, say that a \emph{strict} $\Pi^{0,X}_{n+1}$ (resp.~$\Sigma^{0,X}_{n+1})$ set is given by a decreasing (resp.~increasing) sequence $A_0 \supseteq A_1 \supseteq A_2 \supseteq \ldots$ where each $A_n$ in turn is a strict $\Sigma^{0,X}_n$ (resp.~$\Pi^{0,X}_n$) set. If is not hard to show that $\na{RCA_0 + \ax{B\Sigma_n}}$ proves that every $\Pi^{0,X}_{n+1}$ set is extensionally equal to a strict one. 

If $\bigcup_n A_n$ is a strict $\Sigma^{0,X}_n$ set, then $\mu(\bigcup_n A_n) > \delta$ is defined by the $\Sigma^{0,X}_n$ formula $\ex n (\mu(A_n) > \delta)$. Similarly, if $B$ is a strict $\Pi^{0,X}_n$ set, $\mu(A) \geq \delta$ is defined by a $\Pi^{0,X}_n$ formula. By the results of Yu \cite{yu:93}, over $\na{ACA_0}$ this agrees with the definition of the measure of a set in terms of the infimum of the measures of the open sets covering it. The principles $\ax{n\dash WWKL}$, $\ax{n\dash POS}$, and $\ax{n\dash RAN}$ are then defined as in Section~\ref{formalization:section}. The results of Section~\ref{randomness:section} carry over, yielding:

\begin{theorem}
Let $n \geq 1$. Over $\na{RCA_0}$, the following are equivalent:
\begin{enumerate}
 \item $\ax{n\dash WWKL}$
 \item $\ax{n \dash POS}$
 \item $\ax{B\Sigma_n} + \ax{n\dash RAN}$.
\end{enumerate}
 \end{theorem}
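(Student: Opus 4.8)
The plan is to prove the theorem by induction on $n$, taking the cases $n = 1$ (Theorem~\ref{eq:thm:one}) and $n = 2$ (Theorem~\ref{main:rand:theorem}) as the base, and in the inductive step rerunning the level-two development of Section~\ref{formalization:section} with every occurrence of the Turing jump $X'$ replaced by the $(n-1)$st jump $X^{(n-1)}$ and every complexity class $\Sigma^0_k$, $\Pi^0_k$ shifted up by $n-2$. Concretely, I would establish the two equivalences (1)$\liff$(2) and (1)$\liff$(3) by generalizing Propositions~\ref{two:wwkl:pos:equiv} and \ref{wwkl:ran:prop} respectively, which in turn rest on the generalizations of Propositions~\ref{rand:prop:zero}, \ref{rand:prop:one}, and \ref{rand:prop:two}. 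The existence in $\na{RCA_0}$ of a universal $\Sigma^{0,X}_n$-test, needed to pass from $\ax{n\dash POS}$ to $\ax{n\dash RAN}$ via the complement of $U_1$, generalizes the level-two case without difficulty.

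For the direction (2)$\limplies$(1), I would first generalize Proposition~\ref{rand:prop:one}: a tree $T$ computable from $X^{(n-1)}$ satisfying the density hypothesis of $\ax{n\dash WWKL}$ has a set of paths that is a $\Pi^{0,X}_n$ set of positive measure. The level-two proof uses the Shoenfield limit lemma once to present membership in $T$ as $\lim_m f(\sigma,m) = 1$ for an $X$-computable $f$; here one iterates the limit lemma $n-1$ times to present the $\Delta^{0,X}_n$ membership predicate of $T$, after which the decreasing sequence $(G_n)$ and its measure lower bound are built verbatim. Applying $\ax{n\dash POS}$ then yields a path. The generalization of Proposition~\ref{rand:prop:zero}, giving $\ax{n\dash WWKL} \limplies \ax{B\Sigma_n}$, uses the same tree construction with a $\Pi^0_{n-1}$ matrix $\ph$ in place of a $\Pi^0_1$ one; the resulting tree is computable from $X^{(n-1)}$, and the density argument is unchanged.

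For (1)$\limplies$(2) I would generalize the inner-regularity Proposition~\ref{rand:prop:two}: over $\na{RCA_0 + \ax{B\Sigma_n}}$, every $\Pi^{0,X}_n$ set of measure $\geq r$ contains a $\Pi^{0,X^{(n-1)}}_1$ subset of measure $\geq r - \delta$. This is, conceptually, Proposition~\ref{rand:prop:two} relativized to the base oracle $X^{(n-2)}$, since a strict $\Pi^{0,X}_n$ set is extensionally a $\Pi^{0,X^{(n-2)}}_2$ set; formally one reruns the construction with the truncation function now computable from $X^{(n-1)}$, using $\ax{B\Sigma_n}$ to certify the closed presentation of the approximants. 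Combined with the generalization of Proposition~\ref{rand:prop:zero} and the usual reduction of a $\Pi^{0,X^{(n-1)}}_1$ set of positive measure to a tree computable from $X^{(n-1)}$, this yields $\ax{n\dash WWKL}$ and completes (1)$\liff$(2). The equivalence (1)$\liff$(3) then follows as in Proposition~\ref{wwkl:ran:prop}: inner regularity reduces a positive-measure $\Pi^{0,X}_n$ set to a positive-measure $\Pi^{0,X^{(n-1)}}_1$ subset, and the independence/column argument of Theorem~\ref{eq:thm:one} --- choosing $Y$ that is $n$-random relative to $X$ and locating a column $\pi_i Y$ in the subset --- goes through using $\ax{B\Sigma_n}$.

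The main obstacle is formalizing computability relative to the iterated jump $X^{(n-1)}$ when the jumps need not exist as sets: one must carry $X^{(n-1)}$ as a nested arithmetical definition, iterating the $\Sigma^{0,X}_1$ description of $X'$ from Section~\ref{formalization:section}, and verify --- with exactly the right amount of collection --- the hierarchy facts that the proofs silently use, namely that $X^{(n-1)}$ decides $\Sigma^0_{n-1}(X)$ predicates, that the $\Sigma^{0,X}_k$ classes are closed under bounded quantification, and that the iterated limit lemma behaves as expected. All of these need $\ax{B\Sigma_n}$ (equivalently $\ax{I\Sigma_{n-1}}$ together with the lower collection principles). The apparent circularity --- that the development seems to presuppose the very collection principle that clause (1) is meant to deliver --- is broken by the induction: the implication $\ax{n\dash WWKL} \limplies \ax{(n-1)\dash WWKL}$ is immediate, since a tree computable from $X^{(n-2)}$ is computable from $X^{(n-1)}$, so the inductive hypothesis already supplies $\ax{B\Sigma_{n-1}}$, which is enough to make the tree in the generalized Proposition~\ref{rand:prop:zero} genuinely $X^{(n-1)}$-computable and to close the argument for $\ax{B\Sigma_n}$.
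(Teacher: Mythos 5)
Your proposal follows essentially the same route as the paper, which itself only sketches this theorem by saying the arguments of Section~\ref{formalization:section} (Propositions~\ref{rand:prop:zero}--\ref{wwkl:ran:prop}) adapt level by level, with $\ax{B\Sigma_n}$ doing the work of $\ax{B\Sigma_2}$ and $X^{(n-1)}$ that of $X'$. Your explicit induction on $n$, using the immediate implication $\ax{n\dash WWKL} \limplies \ax{(n-1)\dash WWKL}$ to supply the $\ax{B\Sigma_{n-1}}$ needed to set up the level-$n$ computability theory, is a sound way of organizing the bootstrapping that the paper leaves implicit.
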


Moreover, the proof of Proposition~\ref{general:eq} generalizes to show that $\ax{n\dash POS}$ is equivalent to the corresponding principle for arbitrary measures on compact measure spaces.

This gives rise to the following picture, in which the only implications that hold are the ones indicated. Note that the model $\mdl M$ of Yu and Simpson \cite{yu:simpson:90}, mentioned in the proof of Proposition~\ref{hier:thm}, satisfies all the principles $\ax{n\dash WWKL}$ but not $\ax{WKL}$.

\begin{center}
\begin{tikzpicture}[->, node distance=1.0cm, semithick]

 \node (RCA)                            {$\na{RCA_0}$};
 \node (WWKL)   [above of=RCA]          {$\ax{WWKL}$};
 \node (blank1) [above of=WWKL]         {};
 \node (WKL)    [right of=blank1]       {$\ax{WKL}$};
 \node (2WWKL)  [left of=blank1]        {$\ax{2\dash WWKL}$};
 \node (3WWKL)  [above of=2WWKL]        {$\ax{3\dash WWKL}$};
 \node (dots)   [above of=3WWKL,
                 node distance=0.65cm]   {$\vdots$};
 \node (blank2) [right of=dots]         {};
 \node (ACA)    [above of=blank2]       {$\ax{ACA}$};

 \draw (ACA)    -> (dots);
 \draw (ACA)    -> (WKL);
 \draw (3WWKL)  -> (2WWKL);
 \draw (2WWKL)  -> (WWKL);
 \draw (WKL)    -> (WWKL); 
 \draw (WWKL)   -> (RCA);
  
\end{tikzpicture}
\end{center}


\end{document}